\title{A Row Analogue
of Hecke Column Insertion}
\author{Daoji Huang}
\author{Mark Shimozono}
\author{Tianyi Yu}
\theoremstyle{plain}
\newtheorem{theorem}{Theorem}[section]
\newtheorem{lemma}[theorem]{Lemma}
\newtheorem{corollary}[theorem]{Corollary}
\newtheorem{proposition}[theorem]{Proposition}
\theoremstyle{definition}
\newtheorem{definition}[theorem]{Definition}
\newtheorem{example}[theorem]{Example}
\theoremstyle{remark}
\newtheorem{remark}[theorem]{Remark}
\numberwithin{equation}{section}
\numberwithin{figure}{section}
\numberwithin{table}{section}
\newcommand{\len}{\mathrm{len}}
\newcommand{\Red}{\mathrm{Red}}
\newcommand{\Dec}{\mathsf{Dec}}
\newcommand{\Inc}{\mathsf{Inc}}
\newcommand{\SSYT}{\mathsf{SSYT}}
\newcommand{\RSSYT}{\mathsf{RSSYT}}
\newcommand{\SVT}{\mathsf{SVT}}
\newcommand{\RSVT}{\mathsf{RSVT}}
\newcommand{\CP}{\mathsf{CP}}
\newcommand{\la}{\lambda}
\newcommand{\wt}{\mathrm{wt}}
\newcommand{\Y}{\mathbb{Y}}
\newcommand{\Z}{\mathbb{Z}}
\newcommand{\row}{\mathrm{row}}
\newcommand{\shape}{\mathrm{shape}}
\newcommand{\Hequiv}{\equiv_H}
\newcommand{\IS}{\Inc_w \times_\Y \SVT}
\newcommand{\IR}{\Inc_w \times_\Y \RSVT}
\newcommand{\DS}{\Dec_w \times_\Y \SVT}
\newcommand{\DR}{\Dec_w \times_\Y \RSVT}
\newcommand{\ISred}{\Inc_w^\Red \times_\Y \SSYT}
\newcommand{\IRred}{\Inc_w^\Red \times_\Y \RSSYT}
\newcommand{\DSred}{\Dec_w^\Red \times_\Y \SSYT}
\newcommand{\DRred}{\Dec_w^\Red \times_\Y \RSSYT}
\newcommand{\bIS}{\Phi_{IS}}
\newcommand{\bIR}{\Phi_{IR}}
\newcommand{\bDS}{\Phi_{DS}}
\newcommand{\bDR}{\Phi_{DR}}
\newcommand{\ev}{\mathrm{ev}}
\newcommand{\rev}{\mathrm{rev}}
\newcommand{\PhiRSK}{\widetilde{\Phi}}
\definecolor{myGreen}{rgb}{0.02, 0.6, 0.02}
\begin{document}

\maketitle


\begin{abstract}
 We introduce a new row insertion algorithm 
on decreasing tableaux and increasing tableaux, generalizing Edelman-Greene (EG) row insertion. 
Our row insertion algorithm is a nontrivial variation of Hecke column insertion which generalizes EG column insertion.     
Similar to Hecke column insertion,
our row insertion is bijective and respects Hecke equivalence, and
therefore recovers the expansions of
stable Grothendieck functions into 
Grassmannian stable Grothendieck functions. 
\end{abstract}

\section{Introduction}
Edelman-Greene (EG) insertion was introduced to give combinatorial expansions of Stanley symmetric functions into Schur functions \cite{EG}. EG insertion achieves this expansion because it respects Coxeter-Knuth (nilplactic) equivalence (an equivalence relation on reduced words for permutations) and satisfies a Pieri rule which guarantees that the recording tableau is semistandard. 
EG insertion comes in four flavors depending on 
the use of increasing versus decreasing tableaux and row versus column insertion. 
The four flavors are essentially the same: the row and column versions are related by the naive transpose and the increasing and decreasing versions are related by reversing the total order on entries.
For particular applications one might require a specific flavor.
For the Stanley-to-Schur expansions both EG column insertion and EG row insertion can be used.
But there is a subtlety here. To get a semistandard recording tableau for EG column insertion, one must use a certain kind of biword as input. Using the same biword for EG row insertion results in a recording tableau which is the \textit{transpose} of semistandard. One must use a different kind of input biword.
The transformation between the two kinds of input biwords involves reversing the reduced word.

Hecke column insertion was introduced in \cite{BKSTY} to give combinatorial expansions of stable Grothendieck functions $G_w$ into stable Grassmannian Grothendieck functions $G_\lambda$. Hecke column insertion realizes these expansions because it respects Hecke equivalence and satisfies a Pieri property which guarantees that the recording tableaux be set-valued. The increasing and decreasing versions of Hecke column insertion
generalize the two flavors of EG column insertion. 

Let us consider the problem of generalizing the two kinds (increasing/decreasing tableaux) of EG row insertion while respecting Hecke equivalence.
The naive transpose of Hecke column insertion respects Hecke equivalence and directly generalizes EG row insertion but satisfies a Pieri property which implies that the recording tableau is the 
\textit{transpose} of a set-valued tableau. This also cannot be fixed by transforming the input biword; operations such as reversal will not even recognizably transform the shape of the output tableau.

Our new insertion generalizes EG row insertion, respects Hecke equivalence and also satisfies the Pieri property which produces set-valued tableaux (as opposed to the transpose of set-valued tableaux). 
This novel insertion has the very unusual property that some values may be moved which are not part of the bumping path. 
One application of this insertion is the expansion
of $G_w$ times a Lascoux polynomial into Lascoux polynomials, which is not achievable by Hecke column insertion \cite{OY}.

\subsection{Various functions}
The main application of these various insertion algorithms, is to expand
the Stanley symmetric functions $F_w$ and stable Grothendieck functions $G_w$.
We define them combinatorially using words. 
A pair of words $(a_1 \cdots a_n,i_1 \cdots i_n)$ is called \textit{compatible}\footnote{These compatible sequences are the reverse words of those defined in \cite{BJS}.}
if $i_j \geq i_{j+1}$ and $i_j = i_{j+1}$
implies $a_j < a_{j+1}$ for all $1 \leq j < n$.
Each word $a$ has an associated permutation $[a]_H$ (see \S \ref{SS:Hecke}).
We say $a$ is a \textit{Hecke word} for $w$ if $[a]_H = w$.
Let $\CP_w$ be the set of compatible pairs $(a, i)$
such that $a$ is a Hecke word for $w$. 
Let $\CP_w^{\Red}$ consist of $(a,i) \in \CP_w$
such that $a$ is reduced (i.e. $\len(a) = \ell(w)$ where $\ell(w)$ is the 
Coxeter length).
By~\cite{BJS} and~\cite{FK},
\begin{align*}
F_w &= \sum_{(a,i) \in \CP_w^\Red} x^{\wt(i)}  \\
G_w &= \sum_{(a,i) \in \CP_w} (-1)^{|\wt(i)| - \len(w)}x^{\wt(i)} \:.
\end{align*}

We define the Schur function $s_\lambda$ and the stable Grassmannian Grothendieck function $G_\lambda$ using tableaux. 
Let $\Y$ be the set of partitions. 
For $\la=(\la_1\ge\la_2\ge\dotsm)\in\Y$,
let $D(\la) = \{(i,j)\in\Z_{>0}^2\mid 1\le j\le \la_i\}$ be its diagram under the English convention with matrix-style indexing. A set-valued tableau $T$ of shape $\la\in\Y$ is a function which assigns to each $s\in D(\la)$ a nonempty finite subset of $\mathbb{Z}_{>0}$, such that 
if $s'$ is immediately to the right (resp. below) $s$ in the same row (resp. column) then $\max(T(s)) \le \min(T(s'))$ (resp. $\max(T(s)) < \min(T(s'))$).
We denote by $\SVT$ (resp. $\RSVT$) the set of set-valued tableaux (resp. reverse set-valued tableaux, meaning all inequalities are reversed). Let $\SSYT$ (resp. $\RSSYT$) denote the set of semistandard (resp. reverse semistandard) Young tableaux, meaning set-valued (resp. reverse set-valued) tableaux in which each set is a singleton. The following is a reverse-set-valued tableau of shape $(3,2)$.
\ytableausetup{boxsize=9mm}
\[
\begin{ytableau}
   5 & 5,4 & \scalebox{0.9}{3,2,1} \\ 3,2 & 2
\end{ytableau}
\]
\ytableausetup{boxsize=normal}


Then $s_\la$ and $G_\la$ each have two equivalent formulas (\cite{Bu} for $G_\la$): 
\begin{align*}
s_\la &= \sum_{\substack{Q \in \SSYT \\ \shape(Q) = \la}} x^{\wt(Q)} 
= \sum_{\substack{Q \in \RSSYT \\ \shape(Q) = \la}} x^{\wt(Q)} \\
G_\la &= \sum_{\substack{Q \in \SVT \\ \shape(Q) = \la}} (-1)^{|\wt(Q)| - |\la|}x^{\wt(Q)} 
= \sum_{\substack{Q \in \RSVT \\ \shape(Q) = \la}} (-1)^{|\wt(Q)| - |\la|}x^{\wt(Q)}\:.
\end{align*}
 
\subsection{Expansions}
The $F_w$ (resp. $G_w$) can be expanded into $s_\la$ 
(resp. $G_\la$).
The expansion coefficients have geometric meaning; they contain all cohomological (resp. $K$-theoretic) equioriented type A quiver constants as special cases \cite{BKSTY}.

There are two ways to write down either of the
two expansions, 
using either increasing tableaux 
or decreasing tableaux.
For a permutation $w$,
let $\Inc_w$ (resp. $\Dec_w$) be the set of increasing (resp. decreasing) tableaux $P$ whose row word 
$\row(P)$ (resp. reverse row word $\rev(\row(P))$; see \S \ref{SS:Hecke}) is a Hecke word 
for $w$. Let $\Inc_w^\Red$ (resp. $\Dec_w^\Red$) 
consists of tableaux in $\Inc_w$ (resp. $\Dec_w$)
whose row word is reduced.
Then we have (\cite{EG} for $F_w$ and \cite{BKSTY} for $G_w$)
\begin{align}
\label{E:F Inc}
F_w &= \sum_{P \in \Inc_w^\Red} s_{\shape(P)} \\
\label{E:F Dec}
&= \sum_{P \in \Dec_w^\Red} s_{\shape(P)}, \\
\label{E:G Inc}
G_w & = \sum_{P \in \Inc_w} (-1)^{\ell(w) - |\shape(P)|} G_{\shape(P)} \\
\label{E:G Dec}
& = \sum_{P \in \Dec_w} (-1)^{\ell(w) - |\shape(P)|} G_{\shape(P)} .  
\end{align}

\subsection{Insertion algorithms: General requirements}
\label{SS:general insertion}
Let $A$ and $B$ be sets of tableaux of partition shape.
We use the notation
\begin{align}
  A \times_\Y B = \{ (P,Q)\in A\times B \mid \shape(P)=\shape(Q)\}
\end{align}
for the fiber product over the maps $A\to \Y$ and $B\to \Y$  given by taking the shape of a tableau.

To give a combinatorial proof of \eqref{E:G Inc} it suffices to produce a bijection $\bIS:\CP_w\to \IS$ or $\bIR: \CP_w\to \IR$ which is weight-preserving:
\begin{align}
  (a,i)&\mapsto(P,Q) \\
  \wt(i) &= \wt(Q).
\end{align}
Similarly to prove \eqref{E:G Dec} it suffices to supply a
weight-preserving bijection $\bDS:\CP_w\to \DS$ or $\bDR:\CP_w\to \DR$.

\subsection{Edelman-Greene insertion: solution for reduced case}
Historically first to be discovered were 
``reduced" restrictions of the above 
bijections. 
The expansions \eqref{E:F Inc} and \eqref{E:F Dec} are 
obtained via four weight-preserving bijections.
These bijections are given by four variations of the
Edelman-Greene insertion (EG insertion) \cite{EG}:
\begin{itemize}
\item $\bIS^\Red:\CP_w^\Red \rightarrow \ISred$: 
EG column insertion into increasing tableaux, 
starting from the right end of the compatible pairs.
\item $\bIR^\Red: \CP_w^\Red \rightarrow \IRred$: 
EG row insertion into increasing tableaux, 
starting from the left end of the compatible pairs.
\item $\bDS^\Red: \CP_w^\Red \rightarrow \DSred$: 
EG row insertion into decreasing tableaux, 
starting from the right end of the compatible pairs.
\item $\bDR^\Red: \CP_w^\Red \rightarrow \DRred$: 
EG column insertion into decreasing tableaux, 
starting from the left end of the compatible pairs.
\end{itemize}
The four ``reduced" bijections are essentially equivalent: 
EG row insertion and EG column insertion are merely 
transposes of each other. 
The relationships are summarized in the commutative diagram in Figure~\ref{F:reduced bijections}.

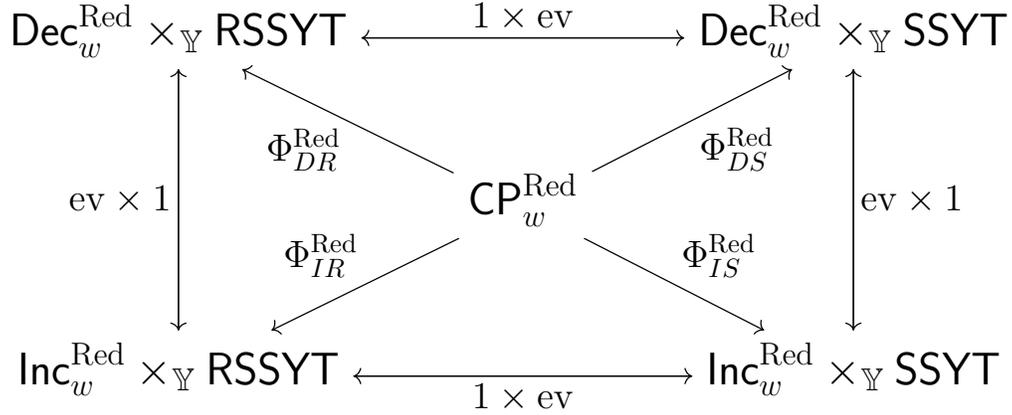
\begin{figure}
\[
\tikzcdset{every label/.append style = {font = \large}, nodes={font=\Large}}
\begin{tikzcd}
[row sep=3em, column sep = 3em]
\Large{\DRred} \arrow[rr, "1 \times \ev"] \arrow[dd, swap, "\ev \times 1"]  &&
\DSred \arrow[ll] \arrow[dd, "\ev \times 1"]\\
& \CP_w^\Red \arrow[lu,"\bDR^\Red"]\arrow[ru,swap,"\bDS^\Red"]\arrow[ld,swap,"\bIR^\Red"]\arrow[rd,"\bIS^\Red"]\\
\IRred \arrow[rr, swap, "1 \times \ev" ] \arrow[uu]&&
\ISred \arrow[ll] \arrow[uu]
\end{tikzcd}
\]
\caption{The four reduced bijections}
\label{F:reduced bijections}
\end{figure}

\begin{example} Consider the following element $(a,i)\in \CP^\Red$.
\begin{align*}
\begin{array}{|c||cccccccc|} \hline 
i & 3&3&3&2&2&2&1&1\\
a & 1&2&4&1&3&5&2&4 \\ \hline
\end{array}
\end{align*}
We have
\ytableausetup{boxsize=5mm,aligntableaux=top}
\begin{align*}
\bDR(a,i) = \left(\, \raisebox{0.3cm}{\ytableaushort{5421,431,2}}\,,\,\raisebox{0.3cm}{\ytableaushort{3332,221,1}}\,\right) 
\bDS(a,i) = \left(\, \raisebox{0.3cm}{\ytableaushort{5421,431,2}}\,,\,\raisebox{0.3cm}{\ytableaushort{1123,223,3}}\,\right) \\
\bIR(a,i) = \left(\, \raisebox{0.3cm}{\ytableaushort{1234,235,4}}\,,\,\raisebox{0.3cm}{\ytableaushort{3332,221,1}}\,\right) 
\bIS(a,i) = \left(\, \raisebox{0.3cm}{\ytableaushort{1234,235,4}}\,,\,\raisebox{0.3cm}{\ytableaushort{1123,223,3}}\,\right) \\
\end{align*}

There are two kinds of evacuation maps
in Figure~\ref{F:reduced bijections}.
The map $\ev:\SSYT\to\RSSYT$ is defined as follows. 
For a $T \in \SSYT$, there is a unique 
$T' \in \RSSYT$ such that $\row(T)$
and $\rev(\row(T'))$ are Knuth equivalent,
where $\rev(\cdot)$ is the operator that reverses a word.
Then $\ev(T) := T'$.
The computation $\ev:\SSYT\to\RSSYT$ 
can be done by jeu-de-taquin as follows.
Sliding out the 1's using the usual 
jeu-de-taquin we obtain
\begin{align*}
\begin{ytableau}
  *(green!30)1 &*(green!30) 1 &2&3\\
  2&2&3 \\
  3
\end{ytableau}\to 
\begin{ytableau}
  2&2&2&3\\
*(green!30) 1 & *(green!30) 1&3 \\
3 
\end{ytableau}\to 
\begin{ytableau}
  2&2&2&3\\
3&3& *(green!30) 1 \\
*(green!30) 1
\end{ytableau}
\end{align*}
Then the 2's are slid out but not past the 1s.
\begin{align*}
\begin{ytableau}
 *(green!30) 2&*(green!30) 2&*(green!30) 2&3\\
3&3& *(cyan!30) 1 \\
*(cyan!30) 1
\end{ytableau}\to
\begin{ytableau}
 3&3&3&*(green!30)2\\
*(green!30)2&*(green!30) 2& *(cyan!30) 1 \\
*(cyan!30) 1
\end{ytableau}
\end{align*}
The 3's need no moving. The result is
\begin{align*}
  \ytableaushort{3332,221,1}
\end{align*}
\end{example}

The following Proposition asserts that the lower triangle in Figure \ref{F:reduced bijections} commutes.

\begin{proposition}[{\cite[Corollary. 7.22]{EG}}]
\label{P:EG left right}
Let $(a,i)\in\CP^\Red_w$ and $\bIS^\Red(a,i)=(P,Q)$ and $\bIR^\Red(a,i)=(P',Q')$. Then
$P=P'$ and $Q'=\ev(Q)$ where $\ev: \SSYT\to\RSSYT$ is Sch\"utzenberger's 
evacuation involution (usual evacuation but without relabeling).
\end{proposition}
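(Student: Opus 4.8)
The plan is to recognize both algorithms as insertions of the bare word $a$ and then to split the conclusion into a claim about insertion tableaux and a claim about recording tableaux. Because $(a,i)$ is compatible, $i$ is weakly decreasing and the letters of $a$ strictly increase along each maximal block on which $i$ is constant. Consequently, reading $(a,i)$ from the right and applying EG column insertion amounts to EG column insertion of the word $\rev(a)$, so $P=P^{\mathrm{col}}(\rev a)$, writing $P^{\mathrm{col}}(v)$ (resp.\ $P^{\mathrm{row}}(v)$) for the insertion tableau of EG column (resp.\ row) insertion of a word $v$; inside each block the inserted letters decrease, so by the Pieri property of EG column insertion that block adds a horizontal strip, and since the recording values $i_n\le\dots\le i_1$ are encountered weakly increasingly, $Q$ is semistandard. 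Dually, $\bIR^\Red$ performs EG row insertion of the word $a$, so $P'=P^{\mathrm{row}}(a)$, each block again adding a horizontal strip, but now with weakly decreasing labels, so $Q'$ is reverse semistandard. The Proposition therefore reduces to two statements about the word $a$: (i) $P^{\mathrm{col}}(\rev a)=P^{\mathrm{row}}(a)$, and (ii) $Q'=\ev(Q)$.

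For (i) I would invoke the uniqueness underlying EG insertion: by \cite{EG}, an increasing tableau is determined by the Coxeter--Knuth class of its row reading word. Now $\row(P^{\mathrm{row}}(a))$ is Coxeter--Knuth equivalent to $a$, while $\row(P^{\mathrm{col}}(v))$ is Coxeter--Knuth equivalent to $\rev(v)$: this is the reversal-and-transpose symmetry relating EG column and row insertion that is alluded to in the introduction, and it follows from the fact that one EG column-bumping step alters the row reading word, up to Coxeter--Knuth equivalence, only by prepending the inserted letter (dually, one EG row-bumping step appends it), together with the stability of Coxeter--Knuth equivalence under word reversal. Taking $v=\rev a$, both $P^{\mathrm{col}}(\rev a)$ and $P^{\mathrm{row}}(a)$ then have row reading word Coxeter--Knuth equivalent to $a$, so they coincide; this is $P=P'$.

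Part (ii) is the crux, and I expect it to be the main obstacle. A standardization argument reduces it to the case where the compatible sequence $i$ is strictly decreasing, so that $Q$ is a standard Young tableau and $Q'$ a reverse standard tableau. There the claim asserts that EG insertion of a fixed reduced word ``from the right'' and ``from the left'' produces recording tableaux that differ by Sch\"utzenberger evacuation --- the EG analogue of Sch\"utzenberger's classical reversal/evacuation theorem. I would prove it by induction on $\ell(w)$, matching the iterative jeu-de-taquin description of $\ev$ (as in the computation of $\ev$ carried out in the Example above) against the effect of peeling off the last inserted biletter from the column insertion and the first inserted biletter from the row insertion; equivalently, the same fact can be read off the $180^{\circ}$-rotation symmetry of the growth diagram attached to the permutation. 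The structural inputs that drive the induction are the reversibility of EG insertion and its compatibility with jeu de taquin on increasing tableaux, and it is precisely in verifying this jeu-de-taquin compatibility that the extra ``Coxeter'' bumping rule of EG insertion --- the case in which $x$ and $x+1$ already occupy a common row --- must be handled with care; granting it, the classical Sch\"utzenberger argument transfers, and de-standardizing returns the statement for all compatible pairs.
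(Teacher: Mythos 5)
The paper does not actually prove this Proposition: it is quoted directly from Edelman--Greene (\cite[Corollary~7.22]{EG}), so your write-up is being measured against that citation rather than an internal argument. Your reduction is reasonable, and part (i) is sound: splitting the claim into an insertion-tableau statement and a recording-tableau statement, and proving $P=P'$ by observing that both row reading words are Coxeter--Knuth equivalent to $a$ (column bumping prepends, row bumping appends, CK relations are closed under reversal) and that an increasing tableau with reduced row word is determined by its CK class, is exactly the style of argument the paper itself uses for Proposition~\ref{P:EG inc dec} via \cite[Theorem~6.24]{EG}.

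The genuine gap is in part (ii), which you correctly identify as the crux: $Q'=\ev(Q)$ \emph{is} the content of the cited EG Corollary~7.22, and your treatment of it is a plan rather than a proof. The induction you outline hinges on an unestablished compatibility of EG insertion with jeu de taquin (equivalently, an EG analogue of Sch\"utzenberger's reversal/evacuation theorem); that compatibility is precisely the nontrivial machinery EG build in their Sections~6--7, so ``granting it, the classical argument transfers'' assumes essentially what is to be shown. The alternative route you offer --- reading the statement off a $180^\circ$ rotation symmetry of the growth diagram --- does not transfer naively: a $180^\circ$ rotation reverses both positions and values, and value-complementation is not available here (it changes the permutation $w$), nor are the EG/Hecke local growth rules symmetric in the way the classical RSK rules are; the reversal-only statement needs its own argument. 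Finally, the standardization step is glossed: one must check that $\ev$ without relabeling commutes with (reverse) standardization, and that the within-block insertion orders (decreasing for the column insertion read from the right, increasing for the row insertion read from the left) are the ones for which the horizontal-strip Pieri properties and the standard-case statement actually combine to give the semistandard claim. As written, the proposal is a credible roadmap whose hardest leg is deferred to the very result being cited.
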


The upper triangle also commutes: it is the same statement but with the total order on values 
reversed. 

The other kind of evacuation map $\ev:\Inc^\Red_w\to\Dec^\Red_w$ 
can be defined similarly. 
For a $T \in \Inc^\Red_w$, there is a unique 
$T' \in \Dec^\Red_w$ such that $\row(T)$
and $\rev(\row(T'))$ are Coxeter-Knuth equivalent~\cite{EG}.
Then $\ev(T) := T'$.
The following result says that the triangle on the right of Figure \ref{F:reduced bijections} is commutative.

\begin{proposition} \label{P:EG inc dec}
Let $(a,i)\in\CP^\Red_w$, $\bIS^\Red(a,i)=(P,Q)$ and $\bDS^\Red(a,i)=(P',Q')$. 
Then $Q=Q'$ 
and $P'=\ev(P)$.
\end{proposition}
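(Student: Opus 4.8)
The plan is to track, step by step, the Coxeter--Knuth classes of the reading words through the two insertions and then invoke the characterization of $\ev\colon\Inc^\Red_w\to\Dec^\Red_w$ by that data. Write $a=a_1\cdots a_n$. Since $\bIS^\Red$ and $\bDS^\Red$ both process $(a,i)$ from the right end, after $k$ steps each has inserted the same consecutive suffix $u^{(k)}:=a_{n-k+1}\cdots a_n$ of $a$, which is again a reduced word. Let $P_k$ (resp.\ $P'_k$) be the increasing (resp.\ decreasing) tableau produced by the first $k$ EG column (resp.\ row) insertion steps, with recording tableaux $Q_k,Q'_k$, so $(P,Q)=(P_n,Q_n)$ and $(P',Q')=(P'_n,Q'_n)$.

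The argument uses two basic properties of the relevant flavors of EG insertion on reduced words \cite{EG}: (i) EG column insertion into increasing tableaux preserves the Coxeter--Knuth class of the reading word, so $\row(P_k)\Hequiv u^{(k)}$; and (ii) EG row insertion into decreasing tableaux preserves the Coxeter--Knuth class of the \emph{reverse} reading word, so $\rev(\row(P'_k))\Hequiv u^{(k)}$. Granting these, $\row(P_k)\Hequiv\rev(\row(P'_k))$ for every $k$; since $u^{(k)}$ is reduced this is a class of reduced words for $w_k:=[u^{(k)}]_H$, so $P_k\in\Inc^\Red_{w_k}$, $P'_k\in\Dec^\Red_{w_k}$, and the defining property of $\ev$ yields $P'_k=\ev(P_k)$. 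The case $k=n$ gives $P'=\ev(P)$.

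Since $\ev$ is shape preserving, $\shape(P_k)=\shape(P'_k)$ for all $k$. Hence the $k$-th column-increasing step adds a new outer cell exactly when the $k$-th row-decreasing step does, and in the same box; as both recording tableaux put $i_k$ into that box (and nothing else changes at step $k$), we conclude $Q_k=Q'_k$ for all $k$, in particular $Q=Q'$.

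The main obstacle is (i) and (ii), and in particular getting the conventions to line up so that it is $\rev(\row(P'_k))$, rather than $\row(P'_k)$, that carries the class of $u^{(k)}$. This requires a single-step analysis: one must see how the reading word of an increasing tableau and the reverse reading word of a decreasing tableau each absorb a newly inserted letter under EG column and EG row insertion respectively, and check that the two induced operations on Coxeter--Knuth classes coincide. Property (i) is essentially Edelman--Greene's theorem; property (ii) could instead be deduced from the transpose relation between row and column EG insertion combined with the already-established commutativity of the lower triangle of Figure~\ref{F:reduced bijections} and its order-reversed analogue, but the direct Coxeter--Knuth route keeps the argument self-contained.
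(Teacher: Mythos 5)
Your treatment of the insertion tableaux is essentially the paper's own argument: the paper also cites Edelman--Greene for the facts that $\row(P)\Hequiv a$ and $\row(P')\Hequiv\rev(a)$ (your properties (i) and (ii), at $k=n$) and then concludes $P'=\ev(P)$ from the defining property of $\ev\colon\Inc^\Red_w\to\Dec^\Red_w$; tracking the suffixes $u^{(k)}$ step by step is a harmless refinement, and since the words are reduced each step does add exactly one cell, so your recording-tableau bookkeeping is correct \emph{granted} shape agreement at every step.

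The gap is precisely there: you assert ``since $\ev$ is shape preserving'' for the map $\Inc^\Red_w\to\Dec^\Red_w$, but this map is defined in the paper purely by Coxeter--Knuth equivalence of $\row(T)$ with $\rev(\row(T'))$, and shape-preservation is not immediate from that definition. In fact, in the paper's logical order the equality of shapes across the four bijections is \emph{deduced} from this very proposition (together with Proposition \ref{P:EG left right} and the commutativity of Figure \ref{F:reduced bijections}); conversely, $P'=\ev(P)$ together with $Q=Q'$ is what yields $\shape(\ev(P))=\shape(P)$. So using shape-preservation of $\ev$ to prove $Q=Q'$ assumes essentially the nontrivial content of the statement you are proving. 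The fact is true, but justifying it independently takes real input from \cite{EG} (for instance, that reversing a reduced word transposes the EG insertion shape, combined with the effect of complementing the alphabet, or an explicit shape-preserving realization of $\ev$ by K-/CK-jeu-de-taquin); the paper avoids all of this by citing \cite[Corollary~7.21]{EG} directly for $Q=Q'$. To repair your write-up, either supply such a justification for shape-preservation of $\ev$, or replace that step by the citation the paper uses, at which point the step-by-step shape comparison becomes unnecessary.
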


\begin{proof}
The statement for $Q$ tableaux is proved in \cite[Corollary. 7.21]{EG}.
By~\cite[Theorem. 6.24]{EG},
$\row(P)$ and $a$ are Coxeter-Knuth equivalent. 
On the other hand, 
$\row(P')$ and $\rev(a)$ are Coxeter-Knuth equivalent.
Thus, $\rev(\row(P'))$ and $\row(P)$ are Coxeter-Knuth equivalent,
so $P' = \ev(P)$.
\end{proof}

Similarly, the triangle on the left also commutes. 
Thus, Figure~\ref{F:reduced bijections} commutes. 
In particular, for any fixed $(a,i)\in \CP_w^\Red$, upon applying 
any of the four EG bijections, the tableau pair has the same shape.

\subsection{Solutions for general case}

Hecke column insertion \cite{BKSTY} defines a bijection
$\bIS: \CP_w \rightarrow \IS_w$ whose restriction to $\CP^\Red_w$ is
EG column insertion; the insertion starts at the left end of the word. By merely reversing the total order on entries in tableaux
and inserting starting from the right end of the input word,
the resulting variant of Hecke column insertion 
gives a bijection $\bDR: \CP_w \rightarrow \DR_w$.

However, there are no known easy variations 
of the Hecke insertion which achieve the bijections
$\bIR$ or $\bDS$. 
The Hecke row insertion is the variant of the Hecke column insertion 
in which the roles of rows and columns are exchanged. A slightly restricted version of the row Hecke insertion was considered
by Patrias and Pylyavskyy~\cite{PP}, where they studied the growth diagrams of Hecke row insertion of a $(a,i)$ where $i$ has distinct entries. In this case, the recording tableaux are valid SVTs.
Unfortunately, when the input is an arbitrary compatible pair, the row Hecke insertion does not always produce a valid SVT as the recording tableau. We illustrate this failure with an example. 

\begin{example}[Pathology of Hecke row insertion]
    \label{ex:hecke-row-fail}
    Consider the compatible pair
    \begin{align*}
\begin{array}{|c||ccccccc|} \hline 
i & 3&3&2&2&2&1&1\\
a & 1&3&1&2&3&1&3\\ \hline
\end{array}
\end{align*}
The procedure for Hecke column insertion $\bIS:\CP_w\to \IS$ is
\begin{align*}
    &\left(\,
 \raisebox{-0.1cm}
{\ytableaushort{3}}\,,\,
\raisebox{-0.1cm}
{\ytableaushort{1}}\,\right)\to
\left(\,
\raisebox{-0.1cm}
{\ytableaushort{13}}\,,\,
\raisebox{-0.1cm}
{\ytableaushort{11}}\,\right)\to
\left(\,
\raisebox{0.1cm}
{\ytableaushort{13,3}}\,,\,
\raisebox{0.1cm}
{\ytableaushort{11,2}}\,\right) \to
\left(\,
\raisebox{0.1cm}
{\ytableaushort{13,2}}\,,\,
\raisebox{0.1cm}{
\begin{ytableau}
 1&\scalebox{0.8}{1,2}\\
 2
\end{ytableau}}\,\right) \\
&\to
\left(\,
\raisebox{0.1cm}
{\ytableaushort{123,2}}\,,\,
\raisebox{0.1cm}{
\begin{ytableau}
 1&\scalebox{0.8}{1,2}&2\\
 2
\end{ytableau}}\,\right)\to
\left(\,
\raisebox{0.3cm}
{\ytableaushort{123,2,3}}\,,\,
\raisebox{0.3cm}{
\begin{ytableau}
 1&\scalebox{0.8}{1,2}&2\\
 2\\
 3
\end{ytableau}}\,\right)\\
& \to 
\left(\,
\raisebox{0.3cm}
{\ytableaushort{123,2,3}}\,,\,
\raisebox{0.3cm}{
\begin{ytableau}
 1&\scalebox{0.8}{1,2}&\scalebox{0.8}{2,3}\\
 2\\
 3
\end{ytableau}}\,\right)
\end{align*}
and similarly we may compute the Hecke column insertion $\bDR:\CP_w\to \DR$ and get the result
\[
\left(\,
\raisebox{0.3cm}
{\ytableaushort{321,2,1}}\,,\,
\raisebox{0.3cm}{
\begin{ytableau}
 3&\scalebox{0.8}{3,2}&\scalebox{0.8}{2,1}\\
 2\\
 1
\end{ytableau}}\,\right).
\]
However, the row versions of these algorithms, which we denote $\bIR^{\mathrm{Hecke}}$ and $\bDS^{\mathrm{Hecke}}$, do not work: the recording tableaux can fail to be set-valued (or reverse set-valued). Explicitly,
if we apply $\bIR^{\mathrm{Hecke}}$ on the $(a,i)$ above, the recording tableau fails to be set-valued after four insertions:
\[\left(\,
 \raisebox{-0.1cm}
{\ytableaushort{1}}\,,\,
\raisebox{-0.1cm}
{\ytableaushort{3}}\,\right)\to
\left(\,
\raisebox{-0.1cm}
{\ytableaushort{13}}\,,\,
\raisebox{-0.1cm}
{\ytableaushort{33}}\,\right)\to
\left(\,
\raisebox{0.1cm}
{\ytableaushort{13,3}}\,,\,
\raisebox{0.1cm}
{\ytableaushort{33,2}}\,\right) \to
\left(\,
\raisebox{0.1cm}
{\ytableaushort{12,3}}\,,\,
\raisebox{0.1cm}{
\begin{ytableau}
 3&3\\
 \textcolor{red}{\scalebox{0.8}{2,2}}
\end{ytableau}}\,\right).
\]
For $\bDS^{\mathrm{Hecke}}$, we suffer from a similar pathology:
\[\left(\,
 \raisebox{-0.1cm}
{\ytableaushort{3}}\,,\,
\raisebox{-0.1cm}
{\ytableaushort{1}}\,\right)\to
\left(\,
\raisebox{-0.1cm}
{\ytableaushort{31}}\,,\,
\raisebox{-0.1cm}
{\ytableaushort{11}}\,\right)\to
\left(\,
\raisebox{0.1cm}
{\ytableaushort{31,1}}\,,\,
\raisebox{0.1cm}
{\ytableaushort{11,2}}\,\right) \to
\left(\,
\raisebox{0.1cm}
{\ytableaushort{32,1}}\,,\,
\raisebox{0.1cm}{
\begin{ytableau}
 1&1\\
 \scalebox{0.8}{\textcolor{red}{2,2}}
\end{ytableau}}\,\right).
\]
\end{example}

This paper introduces a new insertion algorithm $\Phi$ which gives an explicit weight-preserving 
bijection $\bDS: \CP_w \rightarrow \DS_w$.
Our algorithm is a row insertion which, like Hecke insertion, respects the Hecke equivalence relation $\Hequiv$. Our insertion possesses a different Pieri property than the one satisfied by Hecke row insertion; this is necessary to achieve set-valued recording tableaux.
Moreover, when restricted to $\CP_w^\Red$, our algorithm recovers EG row insertion.
A simple variation of our algorithm (reversing the total order on entries) gives a bijection 
$\bIR: \CP_w \rightarrow \IR_w$. Together with the variants of Hecke insertion, our insertion completes the picture in \S \ref{SS:general insertion}: we have produced the generalization of the four diagonal maps in Figure \ref{F:reduced bijections}.
Now the picture looks like Figure~\ref{F:bijections}.
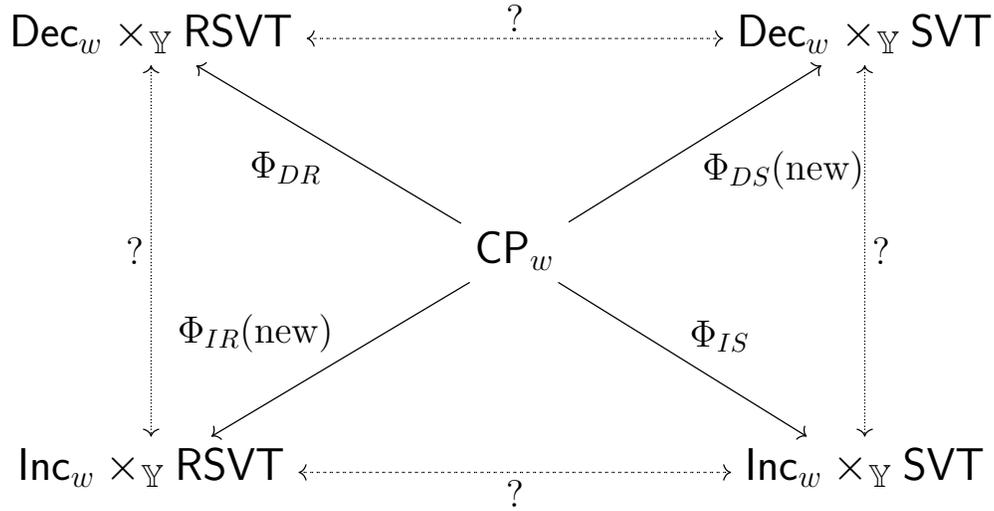
\begin{figure}
\[
\tikzcdset{every label/.append style = {font = \large}, nodes={font=\Large}}
\begin{tikzcd}
[row sep=5em, column sep = 5em]
\Large{\DR} \arrow[rr, "?",dotted] \arrow[dd, swap, "?",dotted]  &&
\DS \arrow[ll,dotted] \arrow[dd, "?",dotted]\\
& \CP_w \arrow[lu,"\bDR"]\arrow[ru,swap,"\bDS \textrm{(new)}"]\arrow[ld,swap,"\bIR\textrm{(new)}"]\arrow[rd,"\bIS"]\\
\IR \arrow[uu,dotted]\arrow[rr, dotted, swap, "?"] &&
\IS \arrow[ll, dotted] \arrow[uu,dotted]
\end{tikzcd}
\]
\caption{Non-reduced analogue of Figure~\ref{F:reduced bijections}}
\label{F:bijections}
\end{figure}

Our insertion generally gives a tableau of a different shape than row Hecke insertion.

\begin{example} Both our insertion 
and (the decreasing tableau version of) 
row Hecke insertion of the word $2421$ produce the same tableau $T$.
Let $P$ and $P'$ denote the tableau obtained when the number $3$ is inserted, for our insertion and row Hecke insertion respectively. $P$ and $P'$ have different shapes. See Example \ref{X:the forward insertion} for our insertion of $3$ into $T$.
\[
T = \begin{ytableau} 4&2&1\\2
\end{ytableau} \qquad
P = \begin{ytableau}
    4&3&1\\ 2&1 
\end{ytableau}\ne
P' = \begin{ytableau}
4&3&1\\
2
\end{ytableau}.
\]
\end{example}

\begin{remark} None of the coherence properties of 
Propositions \ref{P:EG left right} or \ref{P:EG inc dec} generalize to any of the 
bijections in the nonreduced setting. In general the four bijections produce 4 different 
groupings of compatible pairs for the various expansions of $G_w$. 
\end{remark}

In addition, we are not aware of 
any map that can be one of the four maps labeled by the question marks. 
Unlike the reduced case, these four maps 
have to change both the insertion tableaux and recording tableaux.
Even worse, as shown in the following examples,
these four maps are not shape-preserving. 
Therefore, doing an analogue of $\ev(\cdot)$
using the K-Bender-Knuth moves in~\cite{MPS} would not work
since the moves are shape-preserving.

\begin{example} Consider the following element $(a,i)\in \CP_{31524}$.
\begin{align*}
\begin{array}{|c||ccccc|} \hline 
i & 3&3&2&2&1 \\
a & 2&4&1&3&1 \\ \hline
\end{array}
\end{align*}
We first compute $\Phi_{DS}(a,i)$.
Starting with the empty tableau pair we insert the word $a$ starting with the rightmost entry using our row insertion. We obtain the following sequence of tableaux, where the ending box is green. 

\ytableausetup{boxsize=5.6mm,aligntableaux=top}
\[
\varnothing, \quad
\begin{ytableau}
 *(green!30) 1
\end{ytableau},\quad
\begin{ytableau}
  3 \\ *(green!30) 1 
\end{ytableau},\quad
\begin{ytableau}
  3& *(green!30) 1 \\ 1
\end{ytableau},\quad
\begin{ytableau}
  4& 1 \\ 3 \\ *(green!30) 1
\end{ytableau},\quad
\begin{ytableau}
  4&2 \\ 3&*(green!30) 1\\ 1 
\end{ytableau}
\]

The tableau pair $\Phi_{DS}(a,i)$ is
\ytableausetup{boxsize=5.8mm,aligntableaux=top}
\[
\begin{ytableau}
 4&2\\
 3&1\\
 1
\end{ytableau}\,,\,
\begin{ytableau}
 1&2\\
 2&3\\
 3
\end{ytableau}
\]
Under successive Hecke column insertions for increasing tableaux, we obtain 
\[
\varnothing,\quad 
\begin{ytableau} *(green!30) 1 \end{ytableau}, \quad
\begin{ytableau} 1 \\ *(green!30) 3 \end{ytableau}, \quad
\begin{ytableau} 1& *(green!30) 3 \\  3 \end{ytableau}, \quad
\begin{ytableau} 1& 3 \\ 3 \\ *(green!30) 4\end{ytableau},\quad
\begin{ytableau} 1& *(green!30)3 \\ 2 \\ 4\end{ytableau},\quad
\]
so $\Phi_{IS}(a,i)$ is the tableau pair
\[
\begin{ytableau} 1&3\\2\\4\end{ytableau},
\begin{ytableau} 1& \scalebox{0.8}{2,3} \\ 2 \\ 3 \end{ytableau}.
\]
This implies that the map on the right in Figure \ref{F:bijections} cannot be shape-preserving.
By simply reversing both words in $(a,i)$ and replacing
every number $j$ by $5 - j$, we obtain an example that 
implies the map on the left in Figure \ref{F:bijections} cannot be shape-preserving.
\end{example}

\begin{example} Consider the following element $(a,i)\in \CP_{24153}$.
\begin{align*}
\begin{array}{|c||ccccc|} \hline 
i & 5&4&4&4&2 \\
a & 3&1&2&4&2 \\ \hline
\end{array}
\end{align*}
We first compute $\Phi_{DS}(a,i)$.
Starting with the empty tableau pair we insert the word $a$ starting with the rightmost entry using our row insertion. We obtain the following sequence of tableaux, where the ending box is green. 

\ytableausetup{boxsize=5.6mm,aligntableaux=top}
\[
\varnothing, \quad
\begin{ytableau}
 *(green!30) 2
\end{ytableau},\quad
\begin{ytableau}
  4 \\ *(green!30) 2 
\end{ytableau},\quad
\begin{ytableau}
  4& *(green!30) 2 \\ 2
\end{ytableau},\quad
\begin{ytableau}
  4& 2 &*(green!30) 1 \\ 2
  \end{ytableau},\quad
\begin{ytableau}
  4&3&1 \\ 2& *(green!30)1 
\end{ytableau}
\]

The tableau pair $\Phi_{DS}(a,i)$ is
\ytableausetup{boxsize=5.8mm,aligntableaux=top}
\[
\begin{ytableau}
  4&3&1 \\ 2&1 
\end{ytableau}\,,\,
\begin{ytableau}
 2&4&4\\
 4&5
\end{ytableau}
\]
Under successive Hecke column insertions for decreasing tableaux starting with the left end, we obtain 
\[
\varnothing,\quad 
\begin{ytableau} *(green!30) 3 \end{ytableau}, \quad
\begin{ytableau} 3 \\ *(green!30)1 \end{ytableau}, \quad
\begin{ytableau} 3&*(green!30) 1 \\ 2 \end{ytableau}, \quad
\begin{ytableau} 4& 3&*(green!30) 1 \\ 2 \end{ytableau},\quad
\begin{ytableau} 4& 3&1 \\ *(green!30) 2\end{ytableau}
\]
so $\Phi_{DR}(a,i)$ is the tableau pair
\[
\begin{ytableau} 4& 3&1 \\ 2\end{ytableau},
\begin{ytableau} 5&4&4  \\ \scalebox{0.8}{4,2} \end{ytableau}.
\]
This implies that the map on the top in Figure \ref{F:bijections} cannot be shape-preserving.
By simply reversing both words in $(a,i)$ and replacing
every number $j$ by $6 - j$, we obtain an example that 
implies the map on the bottom in Figure \ref{F:bijections} cannot be shape-preserving.
\end{example}

\subsection{Restriction to bounded compatible pairs}

A compatible pair $(a_1 \cdots a_n, i_1 \cdots i_n )$ is 
{\em bounded} if $a_j \geq i_j$ for all $j \in [n]$.
Let $\CP_w^b$ be the set of bounded compatible pairs
in $\CP_w$.
Fomin and Kirillov~\cite{FK} showed that the generating 
function of $\CP_w$ is the Grothendieck polynomial,
which can be viewed as the non-symmetric refinement
of $G_w$.

Shimozono and Yu characterized
the image of $\CP_w^b$ under $\Phi_{DR}$ 
in~{\cite[Definition~4.1, Theorem~4.2]{SY}}. 
Shimozono and Yu used this description to
expand Grothendieck polynomials
into Lascoux polynomials positively. 
This expansion was first conjectured 
by Reiner and Yong~\cite{RY}.

In a follow-up work, 
Orelowitz and Yu~\cite{OY} characterized
the image of $\CP_w^b$ under the map $\Phi_{DS}$
introduced in this paper. 
Their description also leads to the 
Grothendieck-to-Lascoux expansion, as shown
in ~{\cite[Corollary~6.15]{OY}}.
Furthermore, they use the restriction of
$\Phi_{DS}$ on $\CP_w^b$
to expand the product of a Lascoux
polynomial and $G_w$ into Lascoux polynomials. 
See~{\cite[Remark. 3.4]{OY}} for a discussion
on why their arguments cannot work
using $\Phi_{DR}$.

\subsection{Related and future works}
We mention a few works in the literature where the usual Hecke row insertion algorithm of (Hecke) words were applied and studied. It would be interesting to investigate how our insertion algorithm behave in these contexts. We thank the anonymous referee for pointing out these relevant works. 

In \cite{THOMAS2011610}, Thomas and Yong studied the row insertion of a Hecke word through the lens of sampling algorithms for probability measures and proved a symmetry property of the insertion tableaux. In \cite{GUO2020105304}, Guo and Poznanovi\'c proved that the number of 0-1 filling of a stack polyomino subject to certain restrictions only depends on the set of row lengths, using the Hecke insertion algorithm as a main technical tool. Their work was extended by Bloom and Saracino \cite{BLOOM2024103898}. These works exploit the properties of the longest increasing/decreasing subsequences of a word being encoded in the insertion tableau. We leave the investigation of similar properties for our algorithm for future work.

\section{New reverse row insertion}
\subsection{Ejectable values in decreasing tableaux}
To define the new reverse insertion algorithm on decreasing tableaux, we require the notion of an {\em ejectable} value in a decreasing tableau. This is defined recursively.

In this article English notation is used for partitions and tableaux.
A tableau is \textit{decreasing} if its entries strictly decrease from left to right along each row and strictly decrease from top to bottom in each column.
For a decreasing tableau $P$ let $P_{> r}$ denote the decreasing tableau
obtained by removing the first $r$ rows of $P$. Let $P_{\geq r} = P_{>(r-1)}$.

\begin{definition} Let $P$ be a decreasing tableau. A value $x$ is {\em $P$-ejectable} if $x$ occurs in the first row of $P$ and either
$x-1$ is not in the first row of $P$, or $x-1$ is in the first row of $P$ and $x-1$ is $P_{>1}$-ejectable.
\end{definition}

\begin{example} The value $3$ is $P$-ejectable for the tableau $P$ depicted below.
\ytableausetup{aligntableaux=center}
\[
P = 
\begin{ytableau}
7 & 6 & *(cyan!30) 3 & *(green!30) 2 \\
5 & 2 & 1 \\
3 &  1
\end{ytableau}
\]
Since $3$ and $2$ both occur in the first row, $3$ is $P$-ejectable if and only if $2$ is $P_{>1}$-ejectable.
\[
P_{>1} = 
\begin{ytableau}
5 & *(cyan!30)2 & *(green!30)1 \\
3 & 1
\end{ytableau}
\]
Since $2$ and $1$ occur in the first row of $P_{>1}$, $2$ is $P_{>1}$-ejectable if and only if $1$ is $P_{>2}$-ejectable.
\[
P_{>2}=\begin{ytableau}
3 & *(cyan!30) 1
\end{ytableau}
\]
Since the first row of $P_{>2}$ has a $1$ but no $0$, $1$ is $P_{>2}$-ejectable. Hence $3$ is $P$-ejectable.

The value 7 is not $P$-ejectable because there is a 6 in the first row but not in the second.
\end{example}

\subsection{Ejectable values and Hecke equivalence}
\label{SS:Hecke}
The \textit{0-Hecke monoid} is the quotient of the free monoid of words on the alphabet $\Z_{>0}$ by the relations
\begin{align*}
  ii &\Hequiv i \\
  i(i+1)i &\Hequiv (i+1)i(i+1) \\
  ij&\Hequiv ji \qquad\text{for $|i-j|\ge 2$.}
\end{align*}
The minimum-length elements of each $\Hequiv$ class are the reduced words of some permutation $w$, giving a canonical bijection between the $\Hequiv$ classes and permutations of $\Z_{>0}$ moving finitely many elements. We denote by $[a]_H$ the permutation associated with the $\Hequiv$ class of the word $a$.

The row-reading word $\row(P)$ of a tableau $P$ is the word $\dotsm u^{(2)} u^{(1)}$ where $u^{(i)}$ is the word given by reading the $i$-th row of $P$ from left to right.

\begin{lemma}
\label{L:ejectable and Hecke equivalence}
Let $P$ be a decreasing tableau.
If $x$ is an ejectable entry of $P$ then $\row(P) \Hequiv \row(P) x$.
\end{lemma}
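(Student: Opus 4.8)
The plan is to induct on the number of rows of $P$. The base case is a one-row decreasing tableau, where $x$ being ejectable means $x$ is an entry of the row and $x-1$ is not. Writing $\row(P) = v_1 \cdots v_k$ with $v_1 > \dots > v_k$ and $v_m = x$ for some $m$, I want to show $v_1 \cdots v_k \Hequiv v_1 \cdots v_k x$. Since $v_{m+1}, \dots, v_k$ are all $< x$, and none of them equals $x-1$ (it isn't in the row), all of them differ from $x$ by at least $2$; hence $x$ commutes past each of them via the far-commutation relation, moving the appended $x$ leftward until it sits immediately after $v_m = x$, at which point $xx \Hequiv x$ absorbs it. This handles the base case and, more importantly, isolates the mechanism I will reuse.

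For the inductive step, suppose $P$ has $r \ge 2$ rows and $x$ is $P$-ejectable. Let $u^{(1)}$ be the first row word and $R = \row(P_{>1})$, so $\row(P) = R\, u^{(1)}$. There are two cases from the definition. In the first case, $x-1$ is not in the first row; then the same far-commutation-and-absorption argument as in the base case pushes the appended $x$ leftward through $u^{(1)}$ to merge with the copy of $x$ in $u^{(1)}$, and $R$ is untouched, so $\row(P)\,x \Hequiv \row(P)$. In the second case, $x-1$ \emph{is} in the first row and $x-1$ is $P_{>1}$-ejectable; by the inductive hypothesis $R \Hequiv R(x-1)$, so
\begin{align*}
\row(P)\,x = R\,u^{(1)}\,x \Hequiv R(x-1)\,u^{(1)}\,x.
\end{align*}
Now I need to transport the trailing $x$ leftward through $u^{(1)}$ until it meets the $x-1$ I have planted. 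Since $u^{(1)}$ is strictly decreasing and contains both $x$ and $x-1$, these appear consecutively in $u^{(1)}$, say $u^{(1)} = p\, x\, (x-1)\, q$ where every entry of $q$ is $\le x-2$. Every entry of $q$ commutes with $x$ (difference $\ge 2$), so the trailing $x$ slides left to just after $x-1$: $R(x-1)\,p\,x\,(x-1)\,x\,q \Hequiv R(x-1)\,p\,x\,(x-1)\,x\,q$, and then the braid relation $x\,(x-1)\,x \Hequiv (x-1)\,x\,(x-1)$ converts $x(x-1)x$ into $(x-1)x(x-1)$. This produces a trailing $x-1$ at the right end of the $u^{(1)}$-block, followed only by entries $\le x-2$; sliding that $x-1$ further left through $q$ (difference $\ge 2$ again, since $q$ has no $x-2$... wait, $q$ may contain $x-2$).

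Here is where I must be more careful, and I expect this bookkeeping to be the main obstacle: after the braid move I have a trailing $x-1$ that needs to reach the planted $x-1$ from $R$, but it must pass through the prefix $p$ of $u^{(1)}$ as well as whatever lies between. The cleaner route is to not plant $x-1$ via $R\Hequiv R(x-1)$ first, but instead argue symmetrically: write $\row(P) = R\,p\,x\,(x-1)\,q$, append $x$, commute it left past $q$ (all entries $\le x-2$) to get $R\,p\,x\,(x-1)\,x\,q$, braid to $R\,p\,(x-1)\,x\,(x-1)\,q$, then I have a trailing-ish $x-1$ adjacent on its right to $x(x-1)q$; I'd like to now absorb. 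Instead, observe $R\,p\,(x-1)\,x\,(x-1)\,q$: reading the relevant suffix, I can commute the second $x-1$ leftward past... no. The correct organizing principle is: the braid relation lets me replace the \emph{consecutive pair} $x(x-1)$ appearing in $u^{(1)}$ together with the appended $x$ by $(x-1)\,x\,(x-1)$, and now the \emph{last} of these is an $x-1$ appended to the right of what was $p\,x$ within the first-row block; I then appeal to the inductive hypothesis applied to the tableau whose first row is $p\,x$ (a truncation) — but truncations of rows need not be rows of a decreasing tableau in a way compatible with ejectability. So the genuinely careful step is to set up a stronger inductive statement: something like ``if $x$ is $P$-ejectable then $\row(P)\,x \Hequiv \row(P)$, and moreover the equivalence can be realized by a sequence of relations that only touches the suffix of $\row(P)$ from the leftmost occurrence of $x$ in the first row onward'' — or, better, to induct on rows with the hypothesis phrased so that the $x-1$ produced from $R\Hequiv R(x-1)$ lands exactly where the braid relation needs it. I would reconcile these by writing $\row(P) = R\,u^{(1)}$ and, in the recursive case, \emph{first} commuting the appended $x$ left through the tail $q$ of $u^{(1)}$ (entries $\le x-2$) and applying the braid $x(x-1)x \Hequiv (x-1)x(x-1)$ using the $x,(x-1)$ already in $u^{(1)}$, obtaining $R\,p\,(x-1)\,x\,(x-1)\,q$; then I must move the \emph{middle} $x-1$ — no. I think the clean statement to induct on is: \emph{appending $x$ to $\row(P)$ is $\Hequiv$ to appending $x-1$ to the left of $u^{(1)}$ within $\row(P) = R\,u^{(1)}$, i.e. $R\,u^{(1)}\,x \Hequiv R\,(x-1)\,u^{(1)}$ when $x-1 \in u^{(1)}$}, proved by the commute-then-braid computation above applied to the consecutive pair $x(x-1)$ in $u^{(1)}$; combined with $R \Hequiv R(x-1)$ (inductive hypothesis) and then $(x-1)(x-1) \Hequiv (x-1)$ absorption, this closes the induction. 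The main obstacle, then, is purely organizational: choosing the inductive statement so that the planted $x-1$ and the braid-produced $x-1$ coincide, which the reformulation ``$R\,u^{(1)}\,x \Hequiv R\,(x-1)\,u^{(1)}$'' accomplishes.
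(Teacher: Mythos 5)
Your argument is correct and is essentially the paper's proof: induction on the number of rows, splitting on whether $x-1$ lies in the first row, using the inductive hypothesis $\row(P_{>1})\Hequiv \row(P_{>1})(x-1)$, and the word-level relation $u^{(1)}x\Hequiv (x-1)u^{(1)}$ (the paper's $(x-1)w\Hequiv wx$), which you verify explicitly via far commutations, the braid move, and absorption, whereas the paper takes these elementary word facts as given. Despite the meandering exposition, your final formulation (including the easy unstated commutation of the produced $x-1$ leftward past the prefix $p$ of entries $\ge x+1$) closes the induction exactly as in the paper.
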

\begin{proof} This is proved by 
induction on the number of rows in $P$. 
Let $w$ be the decreasing word given by the first row of $P$ and let $R$ be the 
set of letters in $w$. By definition $\row(P) = \row(P_{> 1})w$.
It suffices to show that
\begin{align}\label{E:sufficient equivalence}
  \row(P_{>1})w\Hequiv \row(P_{>1})wx.
\end{align}
If $x \in R$ and $x-1 \notin R$
then $w \Hequiv w x$ and hence \eqref{E:sufficient equivalence} holds.
Otherwise $x, x-1 \in R$
and the $x-1$ is ejectable in $P_{>1}$.
By the inductive hypothesis, 
$\row(P_{> 1}) \Hequiv \row(P_{> 1}) (x-1)$.
In this case $(x-1) w \Hequiv w x$ and
$$
\row(P_{>1})wx \Hequiv \row(P_{>1})(x-1) w\Hequiv \row(P_{>1})w
$$
and again \eqref{E:sufficient equivalence} holds as required.
\end{proof}

\subsection{Bumping paths}

Let $D(\la)=\{(i,j)\in\Z_{>0}^2\mid \text{$i\ge1$, $j\le \la_i$} \}$ be the diagram of the partition $\la$. The elements of $D(\la)$ are called the \textit{cells} of $\la$ and have a matrix-style indexing: the cell $(i,j)$ is depicted as a box in the $i$-th row and $j$-th column.
For a partition $\la$, a $\la$-removable cell is one that is at the end of its row and bottom of its column. For a tableau $P$, a $P$-removable cell is a $\la$-removable cell where $\la$ is the shape of $P$.

\begin{definition} Let $(r, c)$ be a removable cell for the decreasing tableau $P$.
The (reverse) {\em bumping path} of $(r, c)$ in $P$
is the following sequence of numbers $m_r< m_{r-1}< \dots< m_1$ together with their positions in $P$.
Let $m_r$ be the value of $P$ in $(r,c)$. With the entry $m_{i+1}$ in row $i+1$ defined, 
let $m_i$ be the smallest number in row $i$
such that $m_{i+1} < m_i$.
\end{definition}

\begin{example} A decreasing tableau and the bumping path for its removable cell $(3,2)$ are pictured below.
\[
\begin{ytableau}
8&7&*(green!30) 6 \\
5&*(green!30)4&2 \\
3&*(green!30)2\\
1
\end{ytableau}
\]
\end{example}

In the example above, 
notice that the column index is weakly increasing,
as you go up in bumping path.
This is true in general. 
\begin{lemma}
\label{L: Bumping path weakly right}
Let $m_r< m_{r-1}< \dots< m_1$ 
be a bumping path in $P$.
For $r \geq j > i \geq 1$,
the $m_i$ in row $i$ of $P$
is weakly right of the $m_j$ in row $j$ of $P$.
\end{lemma}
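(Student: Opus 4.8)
The plan is to reduce the statement to the case of consecutive rows and then exploit that $P$ is decreasing in both directions.

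First I would observe that ``weakly right of'' is, for column indices, just the relation $\ge$, which is transitive. So it suffices to prove the claim for $j = i+1$, i.e. that for every $1 \le i < r$ the cell of $m_i$ in row $i$ is weakly right of the cell of $m_{i+1}$ in row $i+1$. The general statement then follows by chaining these inequalities from row $j$ up to row $i$.

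Next I would fix such an $i$ and let $c$ denote the column of the cell in row $i+1$ holding $m_{i+1}$. Since $P$ has partition shape, row $i$ is at least as long as row $i+1$, so the cell $(i,c)$ belongs to $P$; write $v := P(i,c)$. Because entries strictly decrease down each column of a decreasing tableau, $v > m_{i+1}$. Now I would use that row $i$ strictly decreases from left to right: the cells of row $i$ whose entry exceeds $m_{i+1}$ form a left-justified initial segment of that row, which is nonempty because it contains $(i,c)$, and among the entries occupying that segment the minimum is attained at its rightmost cell. By the definition of the bumping path, $m_i$ is precisely this minimum, so the cell of $m_i$ is weakly right of $(i,c)$; in particular its column index is $\ge c$, which is exactly the desired inequality. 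As a byproduct, this argument also confirms that the bumping path is well defined in row $i$, since the relevant set of entries is nonempty.

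I expect no serious obstacle: the only two points that require care are that the cell $(i,c)$ actually exists — which is exactly where the partition shape of $P$ is used — and phrasing the ``initial segment'' observation precisely enough that the minimality built into the definition of $m_i$ is correctly matched with the right endpoint of the block of entries exceeding $m_{i+1}$. Everything else is immediate from the decreasing property, and the argument is purely local (no induction on the number of rows is needed beyond the transitivity bookkeeping).
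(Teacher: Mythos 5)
Your proof is correct and follows essentially the same route as the paper: reduce to consecutive rows, use the entry $v$ directly above $m_{i+1}$ (which exists by the partition shape and satisfies $v>m_{i+1}$ by column-strictness), and conclude from the minimality of $m_i$ and the strictly decreasing row that $m_i$ sits weakly to the right of that cell. Your ``initial segment'' phrasing is just a repackaging of the paper's observation that $v\ge m_i$ forces $m_i$ to lie weakly right of $v$ in a strictly decreasing row.
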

\begin{proof}
We only need to prove this claim for $j = i+1$.
Let $y$ be the number immediately 
above the $m_{i+1}$ in row $i+1$ of $P$,
We have $y > m_{i+1}$.
Thus, $y \geq m_i$, 
The $m_i$ in row $i$ is weakly right of the 
$y$ in this row,
which implies our claim. 
\end{proof}

The element in the first row of any bumping path is ejectable.

\begin{lemma}\label{L:ejected entry is ejectable}
Let $m_r < \cdots < m_1$
be the bumping path of a removable cell of $P$.
Then $m_1$ is ejectable in $P$.
\end{lemma}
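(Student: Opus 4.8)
The plan is to prove this by induction on the number of rows $r$ of the bumping path, mirroring the recursive structure of the definition of ejectability. The base case $r = 1$ is immediate: if the bumping path consists of a single entry $m_1$ sitting in a removable cell at the end of the first row, then there can be no $m_1 - 1$ in the first row (the row is decreasing and $m_1$ is its smallest, i.e.\ rightmost, entry), so $m_1$ is $P$-ejectable by the first clause of the definition.

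For the inductive step, suppose $r \ge 2$. By definition $m_1$ is the smallest entry in the first row of $P$ that exceeds $m_2$, and $m_2 < \cdots < m_r$ is the bumping path in $P_{>1}$ of the cell $(r-1, c)$, which is a removable cell of $P_{>1}$. The inductive hypothesis applied to $P_{>1}$ gives that $m_2$ is $P_{>1}$-ejectable. Now I would split into two cases according to whether $m_1 - 1$ lies in the first row of $P$. If $m_1 - 1$ is not in the first row of $P$, then $m_1$ is $P$-ejectable by the first clause and we are done. If $m_1 - 1$ \emph{is} in the first row, then since $m_1$ is the \emph{smallest} first-row entry exceeding $m_2$, and $m_1 - 1 < m_1$ is also a first-row entry, we must have $m_1 - 1 \le m_2$; but $m_1 > m_2$ forces $m_1 - 1 \ge m_2$, hence $m_1 - 1 = m_2$. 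Thus $m_1 - 1 = m_2$ is $P_{>1}$-ejectable by the inductive hypothesis, which is exactly the second clause of the definition of $P$-ejectability for $m_1$. This completes the induction.

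The one point that requires a little care — and the place I expect the only real friction — is the claim that in the second case $m_1 - 1 = m_2$, i.e.\ that the value just below $m_1$ in the definition of ``ejectable'' coincides with the next entry $m_2$ of the bumping path. The inequality $m_2 < m_1$ is part of the bumping-path definition, and minimality of $m_1$ among first-row entries exceeding $m_2$ is what pins $m_1 - 1$ down once we assume $m_1 - 1$ is in the first row: any first-row entry strictly between $m_2$ and $m_1$ would contradict minimality, so $m_1 - 1$, being such an entry unless it equals $m_2$, must equal $m_2$. Everything else is a direct unwinding of definitions, and the decreasing property of rows is used only to handle the base case and to guarantee these comparisons behave as expected.
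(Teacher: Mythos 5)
Your proof is correct and takes essentially the same route as the paper: induct so that the statement for $P$ reduces to $P_{>1}$, noting that if $m_1-1$ lies in the first row then minimality of $m_1$ forces $m_1-1=m_2$, which is $P_{>1}$-ejectable since $m_r<\cdots<m_2$ is the bumping path of a removable cell of $P_{>1}$. (The paper folds your explicit base case into the clause ``$m_1-1\notin R$''; also note the chain should be written $m_r<\cdots<m_2$, a purely notational slip.)
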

\begin{proof} The proof proceeds by induction on the number of rows in $P$. Let $R$ be first row of $P$. If $m_1-1 \notin R$
then $m_1$ is ejectable in $P$.
Otherwise $m_1, m_1 - 1 \in R$.
Since $m_1$ is the smallest in $R$
such that $m_1 > m_2$, it follows that $m_2 = m_1-1$.
It suffices to show that $m_1 - 1 = m_2$ is ejectable in $P_{>1}$.
This follows from the inductive hypothesis
since $m_r < \cdots < m_2$
is the bumping path of a removable cell in $P_{>1}$.
\end{proof}

\subsection{New reverse insertion}
The reverse insertion algorithm is a map $\Psi$
\[
(P,s,\alpha) \mapsto (P',m)
\]
where the input triple consists of a decreasing tableau $P$, a $P$-removable cell $s=(r,c)$, and $\alpha\in\{0,1\}$. 
The output pair consists of a decreasing tableau $P'$ and $m\in \Z_{>0}$ such that
\begin{align}\label{E:reverse insertion shape change}
    \shape(P') = \begin{cases} \shape(P) & \text{if $\alpha=0$} \\
    \shape(P) - \{s\} & \text{if $\alpha=1$.}
    \end{cases}
\end{align}

For conceptual clarity we precompute the bumping path in $P$ starting at $(r,c)$. For $1\le i\le r$ let $m_i$ denote the entry in the $i$-th row of the bumping path. The output value $m$ is by definition the value $m_1$ in the first row of the bumping path.

The output tableau $P'$ will only differ from $P$ along the bumping path. It is only necessary to specify whether each $m_i$ on the bumping path gets replaced, and if so, by what value.
Unlike most insertion algorithms, the replacement value might not come from the bumping path, but does come from the row below. The behavior on each row is determined iteratively by decreasing $i$ based on the values $m_i$ and $m_{i+1}$, the $i$-th row of $P$, the subtableau $P'_{>i}$, and a status indicator $\alpha_{i+1}\in \{0,1\}$. The $i$-th iteration updates the $i$-th row of $P$ (which becomes the $i$-th row of $P'$) and produces $\alpha_i\in \{0,1\}$.

Let $P'$ be a working tableau which is initialized to $P$.
In the initialization step, if $\alpha = 1$, remove from $P'$ the removable cell in row $r$ and its contents $m_r$ and set $\alpha_r=1$ and $i=r-1$.
If $\alpha=0$ set $m_{r+1} = 0$, $\alpha_{r+1}=0$ and $i=r$.

The algorithm does the following for $i = r, r-1, \cdots,2, 1$.
Let $R$ be the set consisting of numbers in row $i$ of the current tableau $P'$ (or equivalently $P$, since $P$ and $P'$ only differ under row $i$). By definition, $m_i\in R$.

There are several cases. We give each a nickname and mnemonic.
\begin{enumerate}
\item[$\bullet$] \textbf{Dummy (D)}: If $m_i - 1 \in R$ (which implies $m_{i+1}=m_i-1$) do not change the $i$-th row and set $\alpha_i = \alpha_{i+1}$.
\item[$\bullet$] \textbf{Direct Replacement (DR)}: Otherwise if $\alpha_{i+1}=1$ and $m_{i+1}\notin R$, replace $m_i$ by $m_{i+1}$ in row $i$ of $P'$ and set $\alpha_i=1$.
\end{enumerate}

Suppose neither of the two above cases hold. Find the smallest ejectable entry $x$ in $P'_{>i}$ such that $m_{i} > x > m_{i+1}$.

\begin{enumerate}
\item[$\bullet$] \textbf{Indirect Replacement (IR)}:
Suppose $x$ exists.
Replace $m_i$ by $x$ in row $i$ of $P'$ and set $\alpha_i=1$.
\item[$\bullet$] \textbf{No Replacement (NR)}: 
Suppose $x$ does not exist. Do not change the $i$-th row and set $\alpha_i=0$.
\end{enumerate}

\begin{example}
\label{E: reverse insertion}
In the following example, the input parameters are $s=(5,1)$ and $\alpha=0$. To initialize, set $(m_6,m_5,m_4,m_3,m_2,m_1)=(0,1,2,5,6,8)$, $i=5$, and $\alpha_6=0$. The shaded box in the $i$-th row indicates the value $m_i$. The label on the arrow leaving this tableau is the mnemonic for the case of $\Psi$.
\[
\ytableausetup{boxsize=4.25mm}
\tikzcdset{scale cd/.style={every label/.append style={scale=#1},
    cells={nodes={scale=#1}}}}
\begin{tikzcd}[scale cd=.9,ampersand replacement=\&,/tikz/commutative diagrams/column sep= 5mm]
{}\arrow[r,swap,"\alpha_6=0"] \&
\begin{ytableau}
 10&9&8\\
 8&6&3 \\
 7&5&2\\
 4&2&1\\
 *(green!30) 1
\end{ytableau} \arrow[r,"\textrm{NR}"] \arrow[r,swap,"\alpha_5=0"]\&
\begin{ytableau}
 10&9&8\\
 8&6&3 \\
 7&5&2\\
 4&*(green!30) 2&1\\
 1
\end{ytableau} \arrow[r,"\textrm{D}"] \arrow[r,swap,"\alpha_4=0"] \& 
\begin{ytableau}
 10&9&8\\
 8&6&3 \\
 7&*(green!30) 5&2\\
 4&2&1\\
 1
\end{ytableau}
\arrow[r] \arrow[r,"\textrm{IR}"] \arrow[r,swap,"\alpha_3=1"] \&
\begin{ytableau}
 10&9&8\\
 8&*(green!30) 6&3 \\
 7&4&2\\
 4&2&1\\
 1
\end{ytableau}
\arrow[r] \arrow[r,"\textrm{DR}"] \arrow[r,swap,"\alpha_2=1"] \& 
\begin{ytableau}
 10&9&*(green!30) 8\\
 8&5&3 \\
 7&4&2\\
 4&2&1\\
 1
\end{ytableau}
\arrow[r] \arrow[r,"\textrm{DR}"] \arrow[r,swap,"\alpha_1=1"] \& 
\begin{ytableau}
 10&9&6\\
 8&5&3 \\
 7&4&2\\
 4&2&1\\
 1
\end{ytableau}
\end{tikzcd}
\]

\end{example}
\section{Properties of the reverse insertion}
In this section the reverse insertion map $\Psi$ is shown to be well-defined and 
some of its properties are established. 

\begin{lemma}
\label{L: Feint is ejectable} 
For $r \geq i \geq 1$, 
$\alpha_i=0$ if and only if $m_i$ is ejectable in $P'_{\ge i}$.       
\end{lemma}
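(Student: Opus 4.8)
The plan is to prove the statement by downward induction on $i$, running the induction simultaneously with the execution of the algorithm, so that the invariant "$\alpha_i = 0 \iff m_i$ is ejectable in $P'_{\ge i}$" is maintained at every step. First I would handle the base case. If $\alpha = 1$, then after the initialization the first active index is $i = r-1$, with $\alpha_r = 1$ stored; but $m_r$ was removed from $P'$, so $P'_{\ge r}$ is either empty or does not contain the removable cell, and one checks directly that the statement "$\alpha_r = 0 \iff m_r$ ejectable in $P'_{\ge r}$" is vacuous or handled by the setup. If $\alpha = 0$, then $\alpha_r$ is computed by the main loop at $i = r$, so the base case is really the same as the inductive step; the genuine base is $i = r$ with $m_{r+1} = 0$, $\alpha_{r+1} = 0$. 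So I would phrase the induction as: assuming the equivalence holds at level $i+1$ (i.e.\ $\alpha_{i+1} = 0 \iff m_{i+1}$ is ejectable in $P'_{\ge i+1}$, with the convention that $m_{r+1}=0$ is "ejectable" when $\alpha=0$ — actually the cleaner convention is that $\alpha_{r+1}=0$ and there is nothing to check at $r+1$), prove it at level $i$ by examining the four cases D, DR, IR, NR.

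The core of the argument is the case analysis at level $i$, using Lemma~\ref{L:ejectable and Hecke equivalence}'s companions and the structure of ejectability. Recall $m_i$ lies in the first row $R$ of $P'_{\ge i}$, so by definition $m_i$ is ejectable in $P'_{\ge i}$ iff ($m_i - 1 \notin R$) or ($m_i - 1 \in R$ and $m_i - 1$ is ejectable in $P'_{> i} = P'_{\ge i+1}$). In case \textbf{D} we have $m_i - 1 \in R$, and the algorithm forced $m_{i+1} = m_i - 1$ (as asserted in the algorithm description, because $m_{i+1}$ is the smallest entry of row $i$ exceeding $m_{i+2}$... wait, rather because $m_i$ is the smallest entry of row $i$ exceeding $m_{i+1}$, and $m_i - 1$ being in row $i$ forces $m_{i+1} = m_i - 1$); also row $i$ is unchanged so $P'_{\ge i+1}$ has the same first row as before the step. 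Then $m_i$ ejectable in $P'_{\ge i}$ $\iff$ $m_{i+1} = m_i - 1$ ejectable in $P'_{\ge i+1}$ $\iff$ $\alpha_{i+1} = 0$ (inductive hypothesis) $\iff$ $\alpha_i = 0$ (rule D sets $\alpha_i = \alpha_{i+1}$). In case \textbf{NR}, $m_i - 1 \notin R$ (else we'd be in D), so $m_i$ \emph{is} ejectable in $P'_{\ge i}$, and indeed $\alpha_i = 0$; consistent. In cases \textbf{DR} and \textbf{IR}, $\alpha_i = 1$, so I must show $m_i$ is \emph{not} ejectable in $P'_{\ge i}$: this is where I would use that these cases require $m_i - 1 \in R$ is false is \emph{not} what happens — rather DR/IR only trigger when D fails, i.e.\ $m_i - 1 \notin R$... that would make $m_i$ ejectable, contradicting $\alpha_i = 1$. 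So I need to be more careful: re-reading, case DR is "otherwise" after D fails, so indeed $m_i - 1 \notin R$ there too, which naively says $m_i$ is ejectable in $P'_{\ge i}$. The resolution must be that in DR/IR the value $m_i$ gets \emph{replaced} (by $m_{i+1}$ or by $x$), so the entry we track in row $i$ of $P'_{\ge i}$ is no longer $m_i$ — and the claim "$m_i$ is ejectable in $P'_{\ge i}$" should be read with $P'_{\ge i}$ being the \emph{updated} tableau, in which $m_i$ may not even appear in row $i$. So the real content is: after replacement by $x$ (resp.\ $m_{i+1}$), is $m_i$ ejectable in the updated $P'_{\ge i}$? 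Since $m_i$ no longer occurs in row $i$ (the replacement value is strictly smaller and distinct), $m_i$ is not in the first row of updated $P'_{\ge i}$, hence not ejectable — giving $\alpha_i = 1$ correctly. I would need to verify carefully that $m_i$ truly disappears from row $i$, using decreasingness and the inequalities $m_{i+1} < x < m_i$ (IR) or $m_{i+1} < m_i$, $m_{i+1} \notin R$ (DR), and that no other cell of row $i$ equals $m_i$.

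The main obstacle, and the step I would spend the most care on, is precisely disentangling \emph{which} tableau "$P'_{\ge i}$" refers to in the statement — the pre-update or post-update version of row $i$ — and confirming the lemma is consistent under the intended reading (post-update for row $i$, which is then frozen for all later — i.e.\ smaller-index — steps). Once that is pinned down, cases D and NR are immediate from the inductive hypothesis and the definition of ejectability, and cases DR/IR reduce to the observation that $m_i$ is overwritten by a strictly smaller value and therefore cannot be ejectable in the updated tableau. A secondary technical point is justifying the parenthetical claims embedded in the algorithm (e.g.\ "$m_i - 1 \in R$ implies $m_{i+1} = m_i - 1$"), which follow from $m_i$ being the minimal entry of row $i$ exceeding $m_{i+1}$ together with strict decrease along the row; I would state this as a preliminary observation before the case analysis. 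I should also confirm that in case IR the replacement value $x$, being the smallest ejectable entry of $P'_{>i}$ strictly between $m_{i+1}$ and $m_i$, keeps row $i$ strictly decreasing after substitution — this uses Lemma~\ref{L: Bumping path weakly right} to locate the neighbors of $m_i$ in row $i$ relative to the $m_{i+1}$ below, but for the present lemma only the disappearance of $m_i$ from row $i$ is needed, so I would defer the full decreasingness check to the well-definedness lemma and cite it here.
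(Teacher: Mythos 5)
Your plan is correct and follows the paper's proof essentially verbatim: descending induction on $i$, the key interpretive point (stated at the outset of the paper's proof) that $P'_{\ge i}$ is frozen once row $i$ has been processed, the Dummy case resolved via the inductive hypothesis, and the remaining cases resolved by noting that $m_i$ is ejectable in $P'_{\ge i}$ exactly when it is not replaced, which happens only in the No Replacement case. The one loose end in your write-up --- the $\alpha=0$ base case at $i=r$, where your ``nothing to check at $r+1$'' convention could not support a Dummy step --- is harmless because $m_r$ occupies a removable cell and is therefore minimal in its row, so $m_r-1\notin R$ and the Dummy case cannot occur there.
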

\begin{proof} Note that after the $i$-th row is processed,
the subtableau $P'_{\ge i}$ remains the same thereafter: only bumping path entries in rows above may be changed.

The proof proceeds by descending induction on $i$.
For the initial step, if $\alpha = 1$, the algorithm sets $\alpha_r=1$ and $m_r$ gets removed and is therefore absent from the $r$-th row of $P'$. 
Thus, $m_r$ is not ejectable in $P_{\ge r}$.
If $\alpha = 0$, during the first iteration, $m_r$ is replaced in the $r$-th row of $P'$ if and only if $\alpha_{r}=1$. 
Hence our claim holds for $i=r$.

Now suppose the claim holds for row $i+1$.
In the Dummy case, $m_{i+1} = m_i - 1$ and $m_i$ is not replaced. Thus $m_i$ is ejectable in $P'_{\ge i}$
if and only if the entry $m_i-1=m_{i+1}$ is ejectable in $P'_{>i}$
(by definition of ejectable) if and only if $\alpha_{i+1}=0$ (by induction) if and only if $\alpha_i=0$ (since in the Dummy case $\alpha_i=\alpha_{i+1}$). 
Otherwise suppose the Dummy case does not hold.
Then $m_i$ and $m_i - 1$ cannot both live in row $i$ of $P'$.
Thus $m_i$ is ejectable in $P'_{\ge i}$ 
if and only if it is not removed from row $i$.
This happens only in the No Replacement case 
and $\alpha_i=0$ only in that case.
Thus our claim holds for row $i$ as required.
\end{proof}

\begin{theorem}
The reverse insertion is a well-defined map.
\end{theorem}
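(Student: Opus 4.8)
The plan is to verify that every step of the algorithm $\Psi$ is unambiguous and that its output satisfies the required properties: $P'$ is a decreasing tableau, its shape is as specified in \eqref{E:reverse insertion shape change}, and $m=m_1\in\Z_{>0}$. Since the bumping path is precomputed and deterministic (each $m_i$ is literally the smallest entry of row $i$ exceeding $m_{i+1}$, and it exists because the cell above $m_{i+1}$ in $P$ is strictly larger, hence a candidate), the only genuine choices made by the algorithm are (i) which case applies on row $i$, and (ii) in the Indirect Replacement case, which ejectable $x$ to use. Point (i) is immediate from the definitions: the four cases are mutually exclusive and exhaustive by construction (Dummy vs.\ not; then DR vs.\ not; then IR vs.\ NR according to whether the requisite $x$ exists). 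Point (ii) is resolved by the phrase ``smallest ejectable entry'', which picks a unique value once we know at least one exists --- and in the IR case we are in the branch where it does. So well-definedness reduces entirely to showing the \emph{output is again a decreasing tableau of the correct shape}.

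For the shape claim, observe that in cases D, IR, NR no cell is added or removed, and in the initialization with $\alpha=1$ exactly the removable cell $s$ is deleted; case DR only overwrites an existing entry. Thus $\shape(P')$ is $\shape(P)$ when $\alpha=0$ and $\shape(P)-\{s\}$ when $\alpha=1$, matching \eqref{E:reverse insertion shape change}. That $m=m_1\in\Z_{>0}$ follows because $m_1$ is an entry of a tableau with positive-integer entries. The real work is the decreasing property of $P'$. I would argue row by row, tracking what actually changes: in case D nothing changes in row $i$; in case NR nothing changes in row $i$; in case DR the entry $m_i$ is replaced by $m_{i+1}$, and in case IR it is replaced by some ejectable $x$ with $m_i>x>m_{i+1}$. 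In every replacement the new value lies strictly between $m_{i+1}$ and $m_i$ (for DR one must check $m_{i+1}<m_i$, which holds since $m_{i+1}<m_i$ by the bumping-path inequalities, and $m_{i+1}\notin R$ guarantees the new entry is distinct from its row-neighbors). So I need: (a) replacing $m_i$ by a value $v$ with $m_{i+1}<v<m_i$ keeps row $i$ strictly decreasing --- this holds because in a decreasing row the left neighbor of $m_i$ is $>m_i>v$ and the right neighbor is $<m_{i+1}<v$, and $v\notin R$ rules out collisions; and (b) the column inequalities across rows $i$ and $i\pm1$ of $P'$ are preserved. For (b) the subtle point is that the entry \emph{below} the changed cell in row $i+1$ of $P'$ might itself have been changed in the previous iteration, so I would use Lemma~\ref{L: Bumping path weakly right} to locate the changed cell of row $i+1$ weakly left of the changed cell of row $i$, and then compare the new value $v$ in row $i$ against whatever sits directly below it --- arguing that below $v$ one finds either an unchanged entry of the original $P_{i+1}$ (which is $<$ the original $m_i\le$? — no, need care) or the new entry of row $i+1$, and in either case the strict inequality $v>(\text{entry below})$ holds. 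The cleanest way is to show the invariant that after processing row $i$, the entry in row $i$ at the bumping-path position is $\le$ the old $m_i$ and $\ge m_{i+1}$ (in fact $>m_{i+1}$ unless nothing changed), and symmetrically bound the row-$i+1$ entry directly beneath it.

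The main obstacle I anticipate is exactly the cross-row column check in the presence of the ``indirect'' replacements, because the replacement value $x$ in case IR is pulled from deep inside $P'_{>i}$ rather than from row $i+1$, so its relation to the cell physically below $m_i$ in row $i+1$ is not transparent. I expect this to require: first, that the cell below $m_i$ in $P$ is some $y<m_i$; second, that after row $i+1$ is processed the entry in that position is still $<x$ --- which should follow from Lemma~\ref{L: Feint is ejectable} (relating the status $\alpha_{i+1}$ to ejectability of $m_{i+1}$ in $P'_{\ge i+1}$) together with the fact that $x$ was chosen ejectable in $P'_{>i}$ and minimal with $x>m_{i+1}$. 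I would likely need a short companion lemma stating that throughout the iteration the working tableau $P'$ stays decreasing and that the bumping-path entry of row $i$ in the updated $P'$ remains squeezed in the interval $[m_{i+1},m_i]$; once that invariant is in place, both the within-row and cross-row inequalities fall out, and well-definedness is complete.
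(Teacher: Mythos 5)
Your skeleton matches the paper's: induct over the iterations, note that the shape statement and the within-row check are easy, and reduce everything to the column inequalities, using Lemma~\ref{L: Bumping path weakly right} to bound every entry to the right of or below $m_i$ by $m_{i+1}$. However, the one genuinely nontrivial step is exactly the one you leave open, and your sketch locates the difficulty in the wrong case. In the Indirect Replacement case there is nothing subtle: the replacement value $x$ satisfies $x>m_{i+1}$, while every entry to the right of or below the $m_i$ in $P'$ before the iteration is at most $m_{i+1}$ (the row-$(i+1)$ bumping-path entry can only have gotten smaller), so strictness is automatic and Lemma~\ref{L: Feint is ejectable} plays no role there. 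The real issue is the Direct Replacement case, where your blanket claim that ``the new value lies strictly between $m_{i+1}$ and $m_i$'' is false: the new value is exactly $m_{i+1}$, so you must show the entry directly below $m_i$ is strictly less than $m_{i+1}$. That entry can a priori equal $m_{i+1}$: if the row-$(i+1)$ iteration was a Dummy step with $\alpha_{i+1}=\alpha_{i+2}=1$, row $i+1$ of $P'$ is unchanged and still contains $m_{i+1}$, possibly in the column of $m_i$. Your proposed invariant (each processed bumping-path entry stays in $[m_{i+1},m_i]$) does not exclude this equality, so it cannot close the argument.

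The paper closes it with a specific contradiction: since the Direct Replacement case has $\alpha_{i+1}=1$, Lemma~\ref{L: Feint is ejectable} says $m_{i+1}$ is not ejectable in $P'_{\ge i+1}$; if $m_{i+1}$ sat directly below $m_i$, non-ejectability would force $m_{i+1}-1$ to occupy the cell immediately to its right, and then the entry $a$ to the right of $m_i$ in row $i$ would satisfy $m_{i+1}-1<a\le m_{i+1}$, hence $a=m_{i+1}\in R$, contradicting the case hypothesis $m_{i+1}\notin R$ (partition shape guarantees $a$ exists). Until you supply this argument, or an equivalent one, the cross-row strictness in the Direct Replacement case --- and with it the well-definedness proof --- is incomplete; the rest of your outline (bumping path well-defined, shape bookkeeping, within-row strictness via $m_{i+1}\notin R$, the trivial Dummy/No Replacement cases, and the initialization at row $r$) is fine and agrees with the paper.
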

\begin{proof}
It must be shown that the output tableau $P'$ is a decreasing tableau. 
We show $P'$ is a decreasing tableau after each iteration of the algorithm.

During initialization, 
if $\alpha=0$, the iteration for $i=r$
either leaves $P'$ unchanged or replaces
$m_r$ in the removable cell $(r,c)$ by a smaller number. 
If $\alpha=1$, after the initialization step $P'$ is a decreasing tableau,
since it is obtained from a decreasing tableau by removing a corner entry.
In either case $P'$ is a decreasing tableau before the $i=r-1$ iteration.

Suppose $P'$ is a decreasing tableau 
after the iteration for row $i+1$. 
It is enough to check that $P'$ is still a decreasing tableau 
after the iteration for row $i$. 
In the Dummy or No Replacement cases there is nothing to check.
In the two remaining cases, 
the number $m_i$ is replaced by a smaller number. 
We need to make sure this smaller number 
is larger than all numbers
on its right and under it. 

Consider $P'$ before this iteration.
By the definition of a bumping path,
numbers on the right of the $m_i$ in row $i$
are at most $m_{i+1}$.
By Lemma~\ref{L: Bumping path weakly right},
numbers below this $m_i$
are also at most $m_{i+1}$.
Next, we consider the two cases.

\begin{enumerate}
\item[$\bullet$] Direct Replacement: 
$m_i$ is replaced by $m_{i+1}$.
We need to make sure $m_{i+1}$ 
is not on the right or under this $m_i$
in $P'$ before this iteration.
First, $m_{i+1}$ cannot be in row $i$
by the condition of this case.
Now assume toward contradiction that
$m_{i+1}$ is immediately below $m_i$. 
This part of $P'$ looks like
$$
\ytableausetup{boxsize=2.5em}
\begin{ytableau}
m_i & a\\
m_{i+1} & b
\end{ytableau}
\ytableausetup{boxsize=2em}
$$
Since $\alpha_{i+1}=1$, $m_{i+1}$ is not ejectable in $P'_{\ge i+1}$.
Thus, $b = m_{i+1} - 1$.
Then $a > m_{i+1} - 1$ and $m_{i+1} \geq a$,
so $a = m_{i+1}$. 
Since $m_{i+1}$ is not in row $i$ a contradiction is reached.
Thus, after replacing the $m_i$ by $m_{i+1}$,
$P'$ is still a decreasing tableau.
\item[$\bullet$] Indirect Replacement: $m_i$ is replaced by $x$.
We know $x > m_{i+1}$.
After replacing $m_i$ by $x$,
$P'$ is still a decreasing tableau,
since numbers on its right and under it are at most $m_{i+1}$. \qedhere
\end{enumerate}
\end{proof}

The reverse insertion respects Hecke equivalence.
\begin{lemma}
\label{L: Reverse insertion Hecke class}
Let $\Psi(P, (r,c), \alpha)=(P', m)$.
Then $\row(P) \Hequiv \row(P') m$.
\end{lemma}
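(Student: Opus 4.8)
The plan is to prove $\row(P) \Hequiv \row(P') m$ by induction on the number of rows of $P$, peeling off one row at a time and tracking what the reverse insertion does to the row word. Write $R$ for the (decreasing) word formed by the first row of $P$ and $R'$ for the first row of $P'$, so that $\row(P) = \row(P_{>1})\,R$ and $\row(P') = \row(P'_{>1})\,R'$. The key point is that $m = m_1$ is the value produced in the first row, and by Lemma~\ref{L:ejected entry is ejectable} it is $P$-ejectable, so Lemma~\ref{L:ejectable and Hecke equivalence} already gives $\row(P) \Hequiv \row(P)\,m$; thus it suffices to show $\row(P)\,m \Hequiv \row(P')\,m$, or equivalently to understand the Hecke-equivalence effect of the first-row update together with the change in the tableau below.

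The heart of the argument is a case analysis on which of the four cases (Dummy, Direct Replacement, Indirect Replacement, No Replacement) applies at row $i=1$, combined with the inductive hypothesis applied to a suitable reverse-insertion instance on $P_{>1}$. In the \textbf{No Replacement} and \textbf{Dummy} cases the first row is unchanged, $R'=R$, and one must only relate $\row(P_{>1})$ to $\row(P'_{>1})$ via the induction; here I expect $P'_{>1}$ to equal $P_{>1}$ (No Replacement) or to be the output of reverse insertion on $P_{>1}$ at the appropriate removable cell (Dummy), so the inductive hypothesis applies directly. In the \textbf{Indirect Replacement} case $m_1$ is replaced in the first row by an ejectable value $x$ of $P'_{>1}$ with $m_1 > x > m_2$; the plan is to use Lemma~\ref{L:ejectable and Hecke equivalence} (applied to $P'_{>1}$ with ejectable value $x$) to slide an $x$ out of $\row(P'_{>1})$, and then use the elementary Hecke relations on the first-row word to convert $R$ with $m_1$ appended into $R'$ with $x$, exactly as in the proof of Lemma~\ref{L:ejectable and Hecke equivalence} where $(x-1)w \Hequiv wx$ type identities were used. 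The \textbf{Direct Replacement} case is similar but uses $m_2$ in place of $x$ and appeals to the induction on $P_{>1}$ to relocate an $m_2$.

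Concretely, the inductive step will be packaged as: the restriction of the algorithm to rows $\le i$ below the top row is itself an instance of reverse insertion applied to $P_{>1}$ (or $P'_{>1}$) with input removable cell determined by $(m_2,\alpha_2)$, producing output value $m_2$ when $\alpha_2=1$; invoking the inductive hypothesis on that instance yields a Hecke equivalence $\row(P_{>1}) \Hequiv \row(P'_{>1})\,m_2$ (or $\row(P_{>1})\Hequiv\row(P'_{>1})$ when $\alpha_2 = 0$ and nothing is ejected below). Substituting this into $\row(P) = \row(P_{>1})R$ and manipulating the first-row letters with the three defining Hecke relations $ii\Hequiv i$, $i(i{+}1)i\Hequiv(i{+}1)i(i{+}1)$, and commutation for $|i-j|\ge 2$ should yield $\row(P')\,m$ in every case.

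I expect the main obstacle to be the bookkeeping in the Indirect Replacement case: one must verify that the ejectable value $x$ found in $P'_{>i}$ interacts correctly with the first row of $P$, using that $m_1$ and $m_1-1$ are not both in row $1$ (so we are not in the Dummy case) together with Lemma~\ref{L: Bumping path weakly right} controlling positions, and then carry out the precise sequence of Hecke moves $\row(P'_{>1})\,x\,R \Hequiv \row(P'_{>1})\,R'\,m_1$. Getting these elementary moves exactly right — and confirming that the ``extra'' values moved outside the bumping path (the distinctive feature of this insertion, mentioned in the introduction) do not disturb the Hecke class — is where the real care is needed; the rest is a routine application of Lemmas~\ref{L:ejectable and Hecke equivalence}, \ref{L: Bumping path weakly right}, and \ref{L:ejected entry is ejectable} together with the inductive hypothesis.
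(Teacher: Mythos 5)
Your overall strategy is the paper's: peel off the first row so that the iterations below it form the sub-instance $\Psi(P_{>1},(r-1,c),\alpha)=(P'_{>1},m_2)$, get $\row(P_{>1})\Hequiv\row(P'_{>1})\,m_2$ from the inductive hypothesis, and finish with a case analysis on the row-$1$ step using Lemma~\ref{L:ejectable and Hecke equivalence}. The gap is in how you dispose of the ejected value $m_2$ in the Indirect Replacement and No Replacement cases. Both crutches you lean on are inaccurate: the sub-insertion \emph{always} ejects a value $m_2$ (what $\alpha_2=0$ buys, via Lemma~\ref{L: Feint is ejectable}, is that $m_2$ is ejectable in $P'_{>1}$, so Lemma~\ref{L:ejectable and Hecke equivalence} absorbs it --- not that ``nothing is ejected below''); and in the No Replacement case $P'_{>1}$ is generally \emph{not} $P_{>1}$, since the lower rows were already modified or a cell was removed. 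Concretely, take $P$ with first row $3\,1$ and second row $1$, $s=(2,1)$, $\alpha=1$: the row-$1$ step is No Replacement, $P'$ is the single row $3\,1$, so $P'_{>1}=\varnothing\neq P_{>1}$, $\alpha_2=1$, and the ejected $m_2=1$ is not ejectable anywhere below; it disappears only because $m_2\in R$ and $m_2+1\notin R$, so $m_2w\Hequiv w$ inside the first-row word ($1\cdot 31\Hequiv 31$, and then $31\Hequiv 31\cdot 3$ gives $\row(P')\,m$).

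That is exactly the step your plan omits: when neither Dummy nor Direct Replacement applies, either $\alpha_2=0$ (then $\row(P'_{>1})\,m_2\Hequiv\row(P'_{>1})$ by Lemmas~\ref{L: Feint is ejectable} and~\ref{L:ejectable and Hecke equivalence}), or $m_2\in R$ with $m_2+1\notin R$ (by the minimality defining $m_1$ together with the failure of the Dummy case), whence $m_2w\Hequiv w$; only after this absorption do your Indirect Replacement manipulation ($xw\Hequiv w'm$ with $x$ ejectable in $P'_{>1}$) and the No Replacement identity $w\Hequiv wm$ close the argument. Your Dummy and Direct Replacement cases are fine, and your opening reduction via Lemmas~\ref{L:ejected entry is ejectable} and~\ref{L:ejectable and Hecke equivalence} is harmless but unnecessary once the $m_2$ bookkeeping is done. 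Two smaller points: the sub-instance's input is the cell $(r-1,c)$ together with the \emph{original} flag $\alpha$, not data ``determined by $(m_2,\alpha_2)$'', which are outputs of that sub-instance; and the case $r=1$ must be treated directly (there is no sub-instance), which is why the paper inducts on $r$ rather than on the number of rows of $P$.
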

\begin{proof}
Let $w$ be the decreasing word given by the first row of $P$ and let $R$ be the 
set of letters in $w$.
Define $R'$ and $w'$ similarly for $P'$.
Notice that $\row(P) = \row(P_{>1}) w$
and $\row(P') = \row(P_{>1}') w'$.
It suffices to show that 
\begin{align}\label{E:sufficient equivalence II}
\row(P_{>1}) w \Hequiv \row(P_{>1}') w' m.
\end{align}

The proof proceeds by induction on $r$,
the row index of the entry in the input of $\Psi$.
The base case is $r=1$.
In this case $\row(P_{>1}) = \row(P_{>1}')$.
If $\alpha = 1$,
then $w = w' m$.
Otherwise, in the first iteration,
the algorithm searches
for the smallest ejectable $x < m$ in $P_{>1}$.
If $x$ does not exist
then $w \Hequiv w m = w' m$.
Otherwise
$w'$ is obtained by 
changing $m$ in $w$ into $x$.
We see that \eqref{E:sufficient equivalence II} holds:
$$
\row(P_{>1})w 
\Hequiv \row(P_{>1}) x w
\Hequiv \row(P_{>1}) w' m.
$$

For the inductive step let $r > 1$.
Before the last iteration the algorithm behaves
as if doing $\Psi$ on $(P_{>1}, (r-1, c), \alpha)$.
By the definition of $\Psi$ the result is $(P_{>1}', m_2)$.
By the inductive hypothesis,
$\row(P_{>1}) \Hequiv \row(P_{>1}') m_2$.
It is enough to check 
$$\row(P_{>1}') m_2 w \Hequiv \row(P_{>1}') w' m. $$

Consider the first two cases of the last iteration.
\begin{itemize}
\item Dummy:
In this case, $m, m - 1 \in R$
and $m_2 = m - 1$.
We have $m_2 w \Hequiv w m = w' m$.
\item Direct Replacement:
In this case, $m - 1, m_2 \notin R$.
We know $w$ is obtained from $w$
by changing $m$ into $m_2$.
We have $m_2 w \Hequiv w' m$.
\end{itemize}
We may assume the above two cases do not hold. Then either
$m_2 \in R$ or $\alpha_2=0$ ($m_2$ is ejectable in $P'_{>1}$).
In either case we claim $\row(P_{>1}') m_2 w \Hequiv \row(P_{>1}')w$:
If $m_2 \in R$,
then $m_2 w\Hequiv w$
since $m_2 + 1$ is not in $w$ and $m_2$
is in $w$.
If the $m_2$ is ejectable in $P'_{>1}$
then $\row(P_{>1}')m_2 \Hequiv \row(P_{>1}')$
by Lemma~\ref{L:ejectable and Hecke equivalence}.

With this claim, it must be shown that
$$\row(P_{>1}') w \Hequiv \row(P_{>1}') w' m. $$
It must be verified that this holds 
in the remaining two cases:
\begin{itemize}
\item Indirect Replacement:
Since $x$ is ejectable in $P_{>1}'$,
$\row(P_{>1}') \Hequiv \row(P_{>1}') x$.
$w'$ is obtained by changing $m$ to $x$ in $w$.
Thus $x w \Hequiv w' m$.
\item No Replacement: We have $w\Hequiv wm=w'm$. \qedhere
\end{itemize}
\end{proof}

Our reverse row insertion
satisfies the following Pieri condition,
which is not satisfied by Hecke reverse
row insertion.

\begin{lemma}
\label{L: Pieri of reverse insertion} Let $\alpha,\alpha'\in \{0,1\}$, 
$P$ a decreasing tableau with removable corner $(r_1,c_1)$,
\[\Psi(P, (r_1,c_1), \alpha)=(P', m) \text{ and }
\Psi(P',(r_2,c_2), \alpha')=(P'',m')\] with $(r_2,c_2)$ a removable corner of $P'$ with $c_2 < c_1$. Then $m'>m$.
\end{lemma}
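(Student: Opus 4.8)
The plan is to prove the lemma by induction on the number of rows of $P$. Write $m=m_1$ for the row-$1$ entry of the bumping path of $(r_1,c_1)$ in $P$ (so $m$ is the value returned by the first $\Psi$), and write $m'=n_1$ for the row-$1$ entry of the bumping path of $(r_2,c_2)$ in $P'$; by the definition of a bumping path, $n_1$ is strictly larger than the row-$2$ entry $n_2$ of that path whenever $r_2\ge 2$. I will repeatedly use that $P'$ is a decreasing tableau (so each of its rows is a strictly decreasing word), that the first row of $P'$ differs from the first row of $P$ only at the single cell of the first bumping path lying in row $1$ — call its column $c^\star$ — and that at that cell $m_1$ is either left unchanged (cases Dummy and No Replacement) or replaced by a strictly smaller value (cases Direct Replacement and Indirect Replacement). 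Consequently every entry of row $1$ of $P'$ strictly left of $c^\star$ exceeds $m_1$, and, if the value at $c^\star$ is $v\le m_1$, every entry at or right of $c^\star$ is $\le v$; thus row $1$ of $P'$ has no value in the interval $(v,m_1]$.

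First dispose of the case $r_1=1$. If $\alpha=1$, then $P'$ is $P$ with the corner $(1,c_1)$ deleted, so every entry of row $1$ of $P'$ sits strictly left of column $c_1$ and hence exceeds $m_1=m$; since $m'=n_1$ is an entry of row $1$ of $P'$, $m'>m$. If $\alpha=0$, the Dummy case cannot occur at $i=1$ (it would need $m_1-1$ to the right of $m_1$ in row $1$, impossible because $(1,c_1)$ removable means $m_1$ occupies the last cell of row $1$), so $P'_{>1}=P_{>1}$; moreover $c_2<c_1$ together with the fact that $(1,c_1)$ is the last cell of row $1$ of $P'$ forces $r_2\ge 2$, so $n_2$ exists and is ejectable in $P'_{>1}=P_{>1}$ by Lemma~\ref{L:ejected entry is ejectable}. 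In the No Replacement case there is no ejectable entry of $P_{>1}$ strictly below $m_1$, so $n_2\ge m_1$ and $m'=n_1>n_2\ge m_1=m$; in the Indirect Replacement case, if $n_2$ were below the ejected value $x$ it would be an ejectable entry of $P_{>1}$ below $m_1$ and smaller than $x$, contradicting minimality of $x$, so $n_2\ge x$, and since row $1$ of $P'$ has no value in $(x,m_1]$, $n_1>n_2\ge x$ forces $n_1>m_1$.

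For the inductive step, suppose $r_1\ge 2$; from the shape relation \eqref{E:reverse insertion shape change} and $c_2<c_1$ one checks that $r_2\ge 2$ as well. As in the proof of Lemma~\ref{L: Reverse insertion Hecke class}, the iterations $i=r_1,\dots,2$ of the first $\Psi$ constitute exactly the computation $\Psi(P_{>1},(r_1-1,c_1),\alpha)=(P'_{>1},m_2)$ with exit status $\alpha_2$, and the iterations $i=r_2,\dots,2$ of the second $\Psi$ constitute $\Psi(P'_{>1},(r_2-1,c_2),\alpha')$ with output value $n_2$. Since $(r_1-1,c_1)$ and $(r_2-1,c_2)$ are removable corners of $P_{>1}$ and $P'_{>1}$ respectively with $c_2<c_1$, the inductive hypothesis gives $n_2>m_2$; moreover $n_2$ is ejectable in $P'_{>1}$ by Lemma~\ref{L:ejected entry is ejectable}, and $n_1>n_2$. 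It remains to run the last iteration $i=1$ of the first $\Psi$ and check $m'=n_1>m_1=m$ in each of its four cases. In Dummy we have $m_2=m_1-1$, so $n_2>m_2$ gives $n_2\ge m_1$ and $n_1>n_2\ge m_1$. In Direct Replacement the cell $c^\star$ of row $1$ of $P'$ holds $m_2$, so row $1$ of $P'$ has no value in $(m_2,m_1]$; as $n_1>n_2>m_2$, this forces $n_1>m_1$. In Indirect Replacement, if $n_2<x$ then $n_2$ is an ejectable entry of $P'_{>1}$ strictly between $m_2$ and $m_1$ that is smaller than $x$, contradicting minimality of $x$; hence $n_2\ge x$, the cell $c^\star$ holds $x$, row $1$ of $P'$ has no value in $(x,m_1]$, and $n_1>n_2\ge x$ forces $n_1>m_1$. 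In No Replacement, if $n_2<m_1$ then $n_2$ is an ejectable entry of $P'_{>1}$ strictly between $m_2$ and $m_1$, contradicting that none exists; hence $n_2\ge m_1$ and $n_1>n_2\ge m_1$.

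The step I expect to be most delicate is the bookkeeping that identifies the iterations $i>1$ of $\Psi$ with an honest instance of $\Psi$ run on $P_{>1}$ — matching the precomputed bumping paths, the subtableaux below the current row, and the exit status $\alpha_2$ after the index shift, for both initializations $\alpha\in\{0,1\}$ — since the induction rests on this. The conceptual point underlying the case analysis is that the two pieces of information ``$n_2>m_2$'' (inductive hypothesis) and ``$n_2$ is ejectable in $P'_{>1}$'' (Lemma~\ref{L:ejected entry is ejectable}) are exactly what is needed to keep the second bumping path's row-$1$ value out of the forbidden interval of row-$1$ values of $P'$, which is precisely the interval that would violate $m'>m$.
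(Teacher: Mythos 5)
Your proposal is correct and is essentially the paper's argument: the decisive content is the same four-case analysis (Dummy, Direct/Indirect Replacement, No Replacement) at the current row, driven by exactly the two facts the paper uses, namely $n_{i+1}>m_{i+1}$ and the ejectability of $n_{i+1}$ in $P'_{>i}$ (via Lemma~\ref{L:ejected entry is ejectable}), with your ``forbidden interval'' phrasing replacing the paper's positional ``$n_i$ lies to the left of the replaced cell'' phrasing. The only difference is packaging: the paper runs a single descending induction on the row index $i$ along the two bumping paths, whereas you peel off the first row and invoke the lemma for $P_{>1}$, using the self-similarity of $\Psi$ that the paper itself records in the proof of Lemma~\ref{L: Reverse insertion Hecke class}.
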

\begin{proof}
Let $m_{r_1}<\dotsm <m_1$ be the bumping path for $\Psi$ on $(P,(r_1,c_1),\alpha)$ and $n_{r_2} < \cdots < n_1$ the bumping path for $\Psi$ on $(P',(r_2, c_2),\alpha')$.
By definition $m_1=m$ and $n_1=m'$ so it is enough to show that $n_1>m_1$.

When $m_i$ is ejected from $P_{\geq i}'$,
$n_i$ is a number in the top row
of $P_{\geq i}'$.
Moreover, since $n_i$
is the last number
in a bumping path in $P_{\geq i}'$,
$n_i$ is ejectable in $P_{\geq i}'$.
We check $n_i > m_i$ for all $1\le i\le r_1$ by descending induction on $i$; in the case $\alpha=0$ the initial index is $i=r_1+1$.

For the base case consider the value of $\alpha$.
If $\alpha = 1$, 
$m_{r_1}$ is removed from row $r_1$
and ejected.
Clearly $n_{r_1} > m_{r_1}$.
If $\alpha = 0$,
$m_{r_1+1} = 0 < n_{r_1 + 1}$.

By induction we assume that 
$m_{i+1} < n_{i+1}$.
We consider the cases of the two reverse insertions when they process row $i$.
\begin{enumerate}
\item[$\bullet$] (Dummy case):
In this case, $m_i = m_{i+1} + 1$.
We have $n_i > n_{i+1} \geq m_i$.
\item[$\bullet$] (Direct Replacement case):
In this case, we replace $m_i$
by $m_{i+1}$ in row $i$.
Then $n_i$ is a number in row $i$ of $P'$, and since 
$n_i > n_{i+1}>m_{i+1}$ it must be to the left of $m_{i+1}$.
Thus $n_i$ is to the left of $m_i$
in row $i$ of $P$. We conclude that $n_i > m_i$.
\item[$\bullet$] (Indirect Replacement case):
In this case, we replace $m_i$
by $x$ on row $i$.
Since $n_{i+1} > m_{i+1}$
and $n_{i+1}$ is ejectable in $P_{> i}'$,
$n_{i+1}\geq x$ by the choice of $x$.
Thus $n_i$ is a number in row $i$ of $P'$, and since 
$n_i > n_{i+1} \ge x$, it must be to the left of $x$.
Similar to the previous case, 
$n_i > m_i$.
\item[$\bullet$] (No Replacement case):
In this case, 
there is no $x$ that is ejectable
in $P_{\geq i+1}'$ and $m_{i+1} < x < m_i$.
Since $n_{i+1}$ is ejectable in $P_{\geq i+1}'$,
$n_{i+1} \geq m_i$.
Thus $n_i > m_i$. \qedhere
\end{enumerate}
\end{proof}

\begin{example}\label{X:the example in reverse}
Let $P$ be the following decreasing tableau:
$$
\ytableausetup{boxsize=5mm}
\begin{ytableau}
4 & 3 & 1\\
2 & 1
\end{ytableau}.
$$
Invoke the reverse insertion with
input $(r_1,c_1) = (2,2)$ and $\alpha_1 = 1$.
We obtain $(P', m)$
where $m_1 = 3$ and 
$P'$ is the following decreasing tableau
$$
\begin{ytableau}
4 & 2 & 1\\
2
\end{ytableau}.
$$
If we invoke the reverse insertion on $P'$ with
input $(r_2,c_2) = (2,1)$, the output number will be $m' = 4$. 
This aligns with Lemma~\ref{L: Pieri of reverse insertion}
since $c_2 < c_1$ and $m' > m$.
\end{example}

The reverse insertion algorithm 
is a generalization of EG reverse insertion.

\begin{lemma}
\label{L: Reverse insertion generalizes EG}
Let $P$ be a decreasing tableau
such that $\row(P)$ is reduced. 
Let $\Psi(P, (r,c), 1)=(P', m)$.
Then we also get $(P', m)$
if we apply EG reverse row insertion
at $(r, c)$ in $P$.
\end{lemma}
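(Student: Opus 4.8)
The plan is to recall what EG reverse row insertion does to a decreasing tableau and then check, case by case, that $\Psi(\cdot,\cdot,1)$ reproduces it exactly when $\row(P)$ is reduced. Recall the decreasing-tableau form of EG reverse row insertion at a removable cell $(r,c)$: set $m_r:=P(r,c)$ and delete that cell; then for $i=r-1,\dots,1$, if $m_{i+1}$ and $m_{i+1}+1$ both occur in row $i$ (the reverse special bump) leave row $i$ unchanged and set $m_i:=m_{i+1}+1$, and otherwise let $m_i$ be the smallest entry of row $i$ exceeding $m_{i+1}$, replace that entry by $m_{i+1}$, and continue (the reverse replacement); the output is the resulting tableau together with $m_1$. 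Thus EG reverse row insertion has the same skeleton as $\Psi(\cdot,\cdot,1)$: it deletes the cell $(r,c)$, walks up a path of entries, and on each row either does nothing or replaces one entry by the value coming from below. Note that $\Psi$'s precomputed bumping path ($m_i=$ smallest entry of row $i$ exceeding $m_{i+1}$) agrees with EG's in the replacement case by definition, and also in the special bump case, since there $m_{i+1}+1$ lies in row $i$ and is the smallest such entry.

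First I would show that, when $\row(P)$ is reduced and $\alpha=1$, the algorithm $\Psi$ only ever enters the Dummy and Direct Replacement cases, and that $\alpha_i=1$ for every $i$. This is a descending induction on $i$ starting from $\alpha_r=1$: if $m_i-1\in R$ we are in the Dummy case and $\alpha_i=\alpha_{i+1}=1$; otherwise $m_i-1\notin R$, and I claim $m_{i+1}\notin R$ as well — granting this, Direct Replacement applies (using $\alpha_{i+1}=1$), so $\alpha_i=1$ and no ejectable search is ever carried out. With this in hand the two procedures match step for step: the Dummy case (which forces $m_{i+1}=m_i-1\in R$, hence $m_i=m_{i+1}+1$, so both $m_{i+1}$ and $m_{i+1}+1$ lie in row $i$) is precisely the reverse special bump, and the Direct Replacement case is precisely the reverse replacement, replacing the same entry by the same value $m_{i+1}$. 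Combined with the agreement of the bumping paths, the two output tableaux and the two output values ($m=m_1$) coincide. The case $r=1$ is a trivial base case (both just delete $(1,c)$), which can also start an induction on the number of rows if one prefers to organize the argument by peeling the top row off last.

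The hard part will be the single structural fact that powers everything: if $\row(P)$ is reduced, then the bumping path never meets the configuration ``$m_{i+1}$ lies in row $i$ but $m_{i+1}+1$ does not''. This is exactly the obstruction: in Example~\ref{X:the example in reverse}, where $\row(P)$ is not reduced, this configuration occurs and forces an Indirect Replacement. I would prove the fact via the Edelman–Greene theorem that a decreasing tableau with reduced reading word is the EG-insertion tableau of its own reading word; then EG reverse row insertion on $(P,(r,c))$ is literally inverse to a genuine EG forward insertion, so the forbidden configuration — which would correspond to an illegal forward step — cannot arise, and simultaneously this guarantees that EG reverse is well defined on this input. A direct Coxeter-relation argument is also possible: $u^{(i+1)}u^{(i)}$ is a contiguous subword of the reduced word $\row(P)$, hence reduced; the entries of row $i$ strictly left of its copy of $m_{i+1}$ are all $\ge m_{i+1}+2$ and the entries of row $i+1$ strictly right of its copy are all $<m_{i+1}$, so every letter between the two copies of $m_{i+1}$ commutes with $m_{i+1}$ except possibly a single $m_{i+1}-1$, which one absorbs using $t(t-1)t\Hequiv(t-1)t(t-1)$ before descending into an analogous smaller configuration — the upshot being that the two copies of $m_{i+1}$ slide together and cancel by $ii\Hequiv i$, contradicting reducedness. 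I would use the first route, as making the descent in the second terminate cleanly is more delicate.
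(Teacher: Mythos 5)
Your reduction is the same as the paper's: show that with $\row(P)$ reduced and $\alpha=1$ every iteration of $\Psi$ is Dummy or Direct Replacement (equivalently, the configuration ``$m_{i+1}$ lies in row $i$ but $m_{i+1}+1$ does not'' never occurs along the bumping path), and observe that these two cases coincide with the reverse special bump and the reverse replacement of EG. The gap is in how you rule out that configuration. Your primary route does not follow from the fact you cite: knowing that $P$ is the EG insertion tableau of its own reading word produces a forward insertion sequence whose last created box lies in the first row, not at the prescribed corner $(r,c)$. To get ``EG reverse at $(r,c)$ is literally inverse to a genuine forward insertion'' you need the full EG bijection with recording tableaux (choose a standard $Q$ of shape $\shape(P)$ whose largest entry sits at $(r,c)$), and even then you must still argue that the precomputed reverse path agrees with the reversed forward path and that the forbidden configuration would contradict well-definedness (for instance because the normal reverse bump would then create a repeated entry in row $i$). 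As written, the decisive step is asserted, not proved.

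The fallback Coxeter-relation argument is actually false as a local statement: the forbidden configuration does not force $u^{(i+1)}u^{(i)}$ to be non-reduced. Take $P$ with rows $(7,6,4,2)$, $(4,3,2,1)$, $(3)$ and reverse-insert at the corner $(3,1)$ with $\alpha=1$. The bumping path is $m_3=3$, $m_2=4$, $m_1=6$, and at $i=1$ the value $4$ lies in row $1$ while $5$ does not, so the configuration occurs (row $2$ was a Dummy step); yet $u^{(2)}u^{(1)}=4\,3\,2\,1\,7\,6\,4\,2$ is reduced (its product has Coxeter length $8$). The non-reducedness of $\row(P)=3\,4\,3\,2\,1\,7\,6\,4\,2$ is visible only once row $3$ is used as well: $3\,4\,3\Hequiv 4\,3\,4$, after which the two $4$'s merge. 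This is exactly why the paper does not argue within rows $i$ and $i+1$: it applies Lemma~\ref{L: Reverse insertion Hecke class} to the whole lower portion to get $\row(P_{>i})\Hequiv\row(P'_{>i})\,m_{i+1}$, combines this with $m_{i+1}w\Hequiv w$ for the word $w$ of the first $i$ rows (using $m_{i+1}\in R$, $m_{i+1}+1\notin R$), and thereby exhibits a word of length $\len(\row(P))-1$ Hecke-equivalent to $\row(P)$, contradicting reducedness. Your proposed descent in the case where $m_{i+1}-1$ occurs in row $i+1$ cannot be made to terminate, because in that case there is no contradiction to be found inside those two rows; so both of your routes leave the key claim unproved.
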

\begin{proof}
Since the Dummy and Direct Replacement cases agree with 
EG reverse insertion, it is enough to show that during each iteration, 
one of these cases must apply.

For $\alpha=1$ the initial step agrees with reverse EG insertion.
By induction we assume $\alpha_{i+1}=1$. We will assume 
the iteration for row $i$ is not in the
Dummy nor the Direct Replacement cases and reach a contradiction.
Let $R$ be the $i$-th row of $P$.
We assume $m_i-1\not\in R$ and $m_{i+1}\in R$.
By the minimality of $m_i$, $m_{i+1} + 1\not\in R$.
Let $w$ be the row word of the first $i$ rows of $P$.
We have $m_{i+1} w \Hequiv w$.
Then notice that
$$
\row(P) = \row(P_{>i}) w
\Hequiv \row(P'_{>i}) m_{i+1} w
\Hequiv \row(P'_{>i}) w.
$$
Then $\row(P)$ is not reduced and we obtain the required
contradiction.
\end{proof}

\section{The insertion}
This section gives a direct description of the inverse of $\Psi$, an
insertion algorithm $\Phi$ which ``inserts $m$ into $P$":
\[
(P,m) \mapsto (P', s, \alpha)
\]
where the input pair consists of a decreasing tableau $P$ and $m\in \Z_{>0}$, and the output triple consists of a decreasing tableau $P'$, a removable cell $s=(r,c)$ of $P'$, and $\alpha\in\{0,1\}$ such that the following holds:
\begin{align}\label{E:insertion shape change}
    \shape(P') = \begin{cases} \shape(P) & \text{if $\alpha=0$} \\
    \shape(P) \cup \{s\} & \text{if $\alpha=1$.}
    \end{cases}
\end{align}

The working tableau $P'$ has initial value $P$. 
The $i$-th iteration consists of an insertion of a number $N\in \Z_{>0}$ into  $P'_{\geq i}$. At this point $P'_{\ge i}=P_{\ge i}$; only values in rows before the $i$-th have been changed. Let $R$ be the set consisting of numbers
in row $i$ of $P$. Find the largest $n_1 \in R$
such that $n_1 \leq N$.

\begin{enumerate}
\item[$\bullet$] \textbf{Terminating case 1 (T1)}: If $n_1$ does not exist,
put $N$ at the end of row $i$ in $P'$ and
terminate the algorithm.
The output $P'$ is the current tableau.
The output $(r,c)$ is the coordinate
of this newly added $N$. Set $\alpha=1$.
\end{enumerate}

Otherwise $n_1$ exists. 
Change the $n_1$ in row $i$ of $P'$ into $N$.

\begin{enumerate}
\item[$\bullet$] \textbf{Dummy case (D)}:
If $n_1 = N$ and $N-1 \in R$: 
insert $N-1$ into $P_{> i}'$.

\item[$\bullet$] \textbf{Direct Replacement case (DR)}:
If $n_1 < N$ and $n_1$ is not ejectable in $P_{> i}$:
insert $n_1$ into $P'_{> i}$.
\end{enumerate}

Otherwise assume none of the above cases hold.
Let $n_2$ be the number to the right 
of $n_1$ in row $i$ of $P$,
or $n_2 = 0$ if $n_1$ is the rightmost
number in this row. 
Find the largest ejectable $y$ in $P_{>i}$
such that $n_1 > y > n_2$.

\begin{enumerate}
\item[$\bullet$] 
\textbf{Indirect Replacement case 1 (IR1)}:
If $y$ exists: insert $y$ into $P_{>i}'$.
\item[$\bullet$] 
\textbf{Indirect Replacement case 2 (IR2)}:
If $y$ does not exist and $n_2>0$: 
insert $n_2$ into $P_{>i}'$.
\item[$\bullet$] 
\textbf{Terminating case 2 (T2)}:
If $y$ does not exist and $n_2=0$:
terminate the algorithm.
The output $P'$ is the current tableau.
The output $(r,c)$ is the coordinate
of this $N$ in row $i$ of $P'$. 

Set $\alpha=0$.
\end{enumerate}

\begin{example}
In the following example, we let $P$ be the leftmost tableau and insert $m=8$ into $P$. The output is the rightmost tableau $P'$, $s=(5,1)$, and $\alpha=0$. The unshaded part of each tableau is the part being considered by the insertion in each step.
\[
\ytableausetup{boxsize=4.25mm}
\tikzcdset{scale cd/.style={every label/.append style={scale=#1},
    cells={nodes={scale=#1}}}}
\begin{tikzcd}[scale cd=.9,ampersand replacement=\&,/tikz/commutative diagrams/column sep=5mm]
{}\arrow[r,swap,"\substack{N=8}"] \&
\begin{ytableau}
 10&9&6\\
 8&5&3 \\
 7&4&2\\
 4&2&1\\
 1
\end{ytableau} \arrow[r,"\textrm{DR}"] \arrow[r,swap,"\substack{N=6}"]\&
\begin{ytableau}
 *(lightgray)10&*(lightgray)9&*(lightgray)8\\
 8&5&3 \\
 7&4&2\\
 4&2&1\\
 1
\end{ytableau} \arrow[r,"\textrm{DR}"] \arrow[r,swap,"\substack{N=5}"] \& 
\begin{ytableau}
  *(lightgray)10& *(lightgray)9& *(lightgray)8\\
  *(lightgray)8& *(lightgray)6& *(lightgray)3 \\
 7&4&2\\
 4&2&1\\
 1
\end{ytableau}
\arrow[r] \arrow[r,"\textrm{IR2}"] \arrow[r,swap,"\substack{N=2}"] \&
\begin{ytableau}
 *(lightgray)10&*(lightgray)9&*(lightgray)8\\
 *(lightgray)8&*(lightgray)6&*(lightgray)3 \\
 *(lightgray)7&*(lightgray)5&*(lightgray)2\\
 4&2&1\\
 1
\end{ytableau}
\arrow[r] \arrow[r,"\textrm{D}"] \arrow[r,swap,"N=1"] \& 
\begin{ytableau}
 *(lightgray)10&*(lightgray)9&*(lightgray)8\\
 *(lightgray)8&*(lightgray)6&*(lightgray)3 \\
 *(lightgray)7&*(lightgray)5&*(lightgray)2\\
 *(lightgray)4&*(lightgray)2&*(lightgray)1\\
 1
\end{ytableau}
\arrow[r] \arrow[r,"\textrm{T2}"]  \& 
\begin{ytableau}
 *(lightgray)10&*(lightgray)9&*(lightgray)8\\
 *(lightgray)8&*(lightgray)6&*(lightgray)3 \\
 *(lightgray)7&*(lightgray)5&*(lightgray)2\\
 *(lightgray)4&*(lightgray)2&*(lightgray)1\\
 *(lightgray)1
\end{ytableau}
\end{tikzcd}
\]
\end{example}

\begin{example}
In the following example, we insert $m=5$ into the leftmost tableau $P$. The output is the rightmost tableau $P'$, $s=(3,2)$, and $\alpha=1$.
\[
\ytableausetup{boxsize=4.25mm}
\tikzcdset{scale cd/.style={every label/.append style={scale=#1},
    cells={nodes={scale=#1}}}}
\begin{tikzcd}[scale cd=.9,ampersand replacement=\&,/tikz/commutative diagrams/column sep= 5mm]
{}\arrow[r,swap,"\substack{N=5}"] \&
\begin{ytableau}
 7&4&2\\
 4&3&1\\
 3
\end{ytableau} \arrow[r,"\textrm{IR1}"] \arrow[r,swap,"\substack{N=3}"]\&
\begin{ytableau}
 *(lightgray)7&*(lightgray)5&*(lightgray)2\\
 4&3&1\\
 3
\end{ytableau} \arrow[r,"\textrm{IR2}"] \arrow[r,swap,"\substack{N=1}"] \& 
\begin{ytableau}
  *(lightgray)7&*(lightgray)5&*(lightgray)2\\
 *(lightgray)4&*(lightgray)3&*(lightgray)1\\
 3
\end{ytableau}
\arrow[r] \arrow[r,"\textrm{T1}"]  \&
\begin{ytableau}
*(lightgray)7&*(lightgray)5&*(lightgray)2\\
 *(lightgray)4&*(lightgray)3&*(lightgray)1\\
 *(lightgray)3 &*(lightgray)1
\end{ytableau}
{}
\end{tikzcd}
\]

\end{example}
\section{Properties of the insertion}
In this section the well-definedness of the insertion algorithm $\Phi$ is established 
and some of its properties are studied. 

\begin{lemma}
\label{L: box}
Consider an iteration of $\Phi$
in which $N$ is being inserted into $P_{\geq i}'$ 
in Indirect Replacement case 2. 
Consider the value $n_1$ in row $i$ of $P$.
If there is a number below this $n_1$,
it must be at most $n_2$.
\end{lemma}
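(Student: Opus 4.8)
The claim is a local structural statement about the tableau $P$ in a specific branch of the insertion algorithm. The plan is to unwind the hypotheses of the IR2 case to pin down exactly where $n_1$ and $n_2$ sit relative to $n_1-1$ and the ejectable values of $P_{>i}$, and then argue by contradiction: if the entry $z$ immediately below $n_1$ in $P$ satisfied $z > n_2$, then $z$ would be an admissible choice of $y$ in the search ``largest ejectable $y$ in $P_{>i}$ with $n_1 > y > n_2$,'' contradicting the fact that we are in case IR2 (where no such $y$ exists). So the heart of the matter is to show that such a $z$ is (a) strictly between $n_1$ and $n_2$, and (b) ejectable in $P_{>i}$.

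First I would record what IR2 tells us: we are not in the Dummy case, so it is not the case that $n_1 = N$ with $N-1 \in R$; we are not in Direct Replacement, so it is not the case that $n_1 < N$ with $n_1$ non-ejectable in $P_{>i}$; and we are not in IR1, so there is no ejectable $y$ in $P_{>i}$ with $n_1 > y > n_2$; finally $n_2 > 0$. Part (a) is easy from the tableau conditions: since $P$ is decreasing and $z$ lies in the cell directly below the cell containing $n_1$, we have $z < n_1$ automatically; and since the cell below $n_1$ lies in a row which (being part of a tableau of partition shape) has length at least the column index of $n_2$, the entry of $P$ directly below $n_2$ exists, call it $z'$, and $z' < n_2$ while $z > z'$ (row $i{+}1$ of $P$ is decreasing and $z$ is to the left of $z'$) — wait, that only gives $z > z'$, not $z > n_2$; so in fact part (a), namely $z > n_2$, is precisely the thing being negated, and the whole lemma is the contrapositive: assuming $z > n_2$ we must derive a contradiction, which forces $z \le n_2$.

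So the real work is: \emph{assuming} $z > n_2$ (hence $n_1 > z > n_2$), show $z$ is ejectable in $P_{>i}$, contradicting IR2. The entry $z$ sits in the first row of $P_{>i}$, in the column immediately below $n_1$. By Lemma~\ref{L:ejectable and Hecke equivalence} and the definition of ejectability, $z$ is $P_{>i}$-ejectable unless $z-1$ is also in the first row of $P_{>i}$ and is not $P_{>(i+1)}$-ejectable; so one would recurse down the columns exactly as in the definition. I expect the key input is that $n_1$ is \emph{not} ejectable is \emph{not} assumed here (we only know we're not in DR, which required $n_1 < N$); rather, the relevant fact is how $z-1$ relates to the structure. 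The main obstacle will be handling the recursive ``chain'' case in the definition of ejectable: if $z-1$ sits directly below $z$ (i.e., two rows below $n_1$), one must show that either $z-1$ is not in that row, or it is ejectable further down — and this should follow because, if $z-1$ were present and non-ejectable all the way down, one could run the argument of Lemma~\ref{L:ejectable and Hecke equivalence} to show $\row(P_{>i})$ is not a reduced word / to produce an ejectable value, but more directly: the column of $n_1$ in $P$ is strictly decreasing, so its entries are $n_1 > z > \dots$, and a descending run of consecutive integers starting just below $n_1$ terminates the ejectability recursion at a genuinely ejectable bottom value, which still lies strictly between $n_1$ and $n_2$ provided each entry in that run exceeds $n_2$ — and if at some point the run drops to or below $n_2$ we are done with (a) at that row anyway. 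I would therefore set up an inner induction on rows of $P$ below the $i$-th, mirroring the recursion in the definition of ejectable, to conclude that $z$ (or the appropriate value on its column) is ejectable and strictly between $n_1$ and $n_2$, contradicting the non-existence of $y$ in case IR2.
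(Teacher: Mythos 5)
Your overall strategy is the same as the paper's: assume the entry $z$ directly below $n_1$ satisfies $z > n_2$ and derive a contradiction by exhibiting an ejectable value $y$ of $P_{>i}$ with $n_1 > y > n_2$, which is precisely what cannot exist in Indirect Replacement case 2. The gap is that you never actually carry out the one step that contains all the content, namely showing that $z$ is ejectable in $P_{>i}$, and the mechanism you sketch for it is mis-aimed. The recursion in the definition of ejectable is triggered when $z-1$ occurs in the \emph{first row of} $P_{>i}$, i.e.\ in row $i+1$ alongside $z$ --- not when ``$z-1$ sits directly below $z$'' --- and your proposed inner induction follows the column below $n_1$, whose entries decrease but not by $1$ at each step, so the ``descending run of consecutive integers'' you invoke is not the ejectability chain. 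Moreover your fallback ``if the run drops to or below $n_2$ we are done with (a) at that row anyway'' is a non sequitur: the contradiction requires an ejectable entry of the first row of $P_{>i}$ lying strictly between $n_2$ and $n_1$, and a chain that bottoms out at a value $\le n_2$ certifies no such entry.

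What is missing is a short observation that makes the recursion irrelevant. Since row $i+1$ of $P$ is strictly decreasing, the only entries smaller than $z$ in that row lie strictly to the right of $z$, and the largest of these is the entry directly below $n_2$ (if it exists), which is $< n_2 \le z-1$ because we assumed $z > n_2$. Hence $z-1$ does not occur in row $i+1$ at all, so $z$ is ejectable in $P_{>i}$ by the first branch of the definition, with no recursion needed; together with $n_1 > z > n_2$ this forces the iteration into Indirect Replacement case 1, the desired contradiction. This two-line argument is exactly the paper's proof. As written, your proposal identifies the right contradiction but leaves its key step as an unresolved plan built on a misreading of the ejectability recursion, so it does not yet constitute a proof.
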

\begin{proof}
Let $t_1$ be the number below this $n_1$.
This part of $P$ looks like
$$
\begin{ytableau}
n_1 & n_2\\
t_1 & t_2
\end{ytableau}
$$

Now assume toward contradiction that 
$t_1 > n_2$.
The number $t_2$ either does not exist
or we have $t_2 < n_2 \leq t_1 - 1$.
In either case, 
$t_1$ is ejectable in $P_{> i}'$.
By $n_1 > t_1 > n_2$,
we should go to Indirect Replacement case 1.
Contradiction.
\end{proof}

\begin{lemma}
The insertion algorithm is well-defined.
\end{lemma}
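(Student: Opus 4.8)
The plan is to show that the working tableau $P'$ remains a decreasing tableau after each iteration of $\Phi$, by induction on the iteration index $i$. At the start of the $i$-th iteration we have $P'_{\ge i}=P_{\ge i}$ (so the untouched rows are automatically decreasing), and the rows above row $i$ have already been checked to be decreasing by the inductive hypothesis. So the only thing that can go wrong is that the modification made to row $i$ (changing $n_1$ into $N$, or appending $N$ at the end of row $i$) violates the decreasing condition with its right neighbor or its lower neighbor, or — in the case where row $i$ is changed and the algorithm recurses — that the value $N'$ handed down to $P'_{>i}$ is not a legal thing to insert at the next step. Since the algorithm is specified recursively, the cleanest framing is: assume inductively that $\Phi$ applied to $(P_{>i}, N')$ produces a decreasing tableau whenever $N'$ is the recursion value, and check the single new row.

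The key steps, case by case on the branch taken in row $i$: \textbf{(T1)} $N$ is appended to the end of row $i$; since $n_1$ does not exist, every entry of row $i$ is $>N$ — wait, that's backwards, so in fact $N$ is smaller than everything currently in row $i$, meaning $N$ sits strictly to the right of all of them and is strictly smaller, so row $i$ stays decreasing; one must also check that $(r,c)$ is genuinely a removable cell and that the shape change matches \eqref{E:insertion shape change}, which is immediate. \textbf{(D)} $n_1=N$, so row $i$ is literally unchanged; nothing to check in row $i$, and the recursion inserts $N-1$, which is legal. \textbf{(DR)} $n_1<N$ is replaced by $N$; we need $N$ to still exceed the entry to its right (which is $n_2<n_1<N$, fine) and the entry below it. Here Lemma~\ref{L: box}-style reasoning is needed: the entry $t_1$ below $n_1$ satisfies $t_1<n_1<N$, so it's fine; the recursion re-inserts $n_1$ into $P'_{>i}$, and one must argue $n_1$ is a legal insertion value — which it is, being an actual old entry. \textbf{(IR1)} $n_1$ is replaced by $N$ and $y$ (with $n_1>y>n_2$) is pushed down; again $N>n_1>$ everything to the right and below, so row $i$ is fine, and $y$ is a legal insertion value into $P'_{>i}$ since $y$ is an ejectable entry there. \textbf{(IR2)} $n_1\to N$ and $n_2$ is pushed down; row $i$ is fine as before, and the legality of re-inserting $n_2$ into $P'_{>i}$, plus the fact that the entry below $n_1$ is $\le n_2$, is exactly the content of Lemma~\ref{L: box}. \textbf{(T2)} $n_1\to N$ and the algorithm stops; here we must check $N$ beats its right neighbor (none, since $n_2=0$) and its lower neighbor $t_1$; since no ejectable $y$ with $n_1>y>0$ exists in $P_{>i}'$ and by the structure one shows $t_1<n_1<N$ (the subtlety being why $t_1$, if it exists, must be $<n_1$, which follows from $P$ being decreasing so $t_1<n_1$ automatically). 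Also confirm $\alpha=0$ is consistent with the shape staying the same.

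The main obstacle is the \textbf{(DR)} and \textbf{(T2)} cases, where $n_1$ (a genuine old entry of row $i$) is replaced by a strictly \emph{larger} value $N$: one must be certain $N$ does not collide with whatever sits immediately below $n_1$. Unlike classical insertions, here $N$ can be large, so the argument must invoke the non-ejectability hypothesis in (DR) (mirroring the Direct Replacement analysis in the proof that $\Psi$ is well-defined, where non-ejectability forced the cell below to be $n_1-1$ and hence $<N$) and the bumping-path/ejectability bookkeeping in (T2). A secondary obstacle is bookkeeping: one must verify the recursion always passes down a value that is a \emph{legitimate} insertion input (so the induction closes) and that the terminal cell $(r,c)$ is always removable with the claimed shape change; both are routine but need to be stated. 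I would handle (DR) by essentially transcribing the Direct Replacement contradiction argument from the well-definedness proof of $\Psi$, and (T2) by noting that $P$ decreasing already gives $t_1<n_1$, so the replacement by the larger $N$ preserves the column inequality.
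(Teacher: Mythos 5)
There is a genuine gap, and it sits exactly where the paper's proof does all of its work. Since the modification to row $i$ replaces $n_1$ by the \emph{larger} value $N$ (or appends $N$ at the end of the row), the comparisons with the right neighbor and with the neighbor below are automatic: those entries were already smaller than $n_1\le N$. The comparison that can actually fail is with the entry \emph{above} the modified or appended cell, at position $(i-1,j')$; and in the terminating case T1 one must even show that this cell exists, i.e.\ that row $i-1$ of the working tableau is strictly longer than row $i$, so that the shape remains a partition as required by \eqref{E:insertion shape change} with $\alpha=1$ --- this is not ``immediate''. Your outline checks only the right and lower neighbors in every case, and your ``main obstacle'' paragraph asserts that in (DR) and (T2) one must ensure $N$ does not collide with what sits \emph{below} $n_1$; the direction is backwards, and as a symptom every check in your case analysis is trivially true, so the argument as written establishes nothing beyond the row condition.

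The missing argument is essentially the entire published proof: when $n_1<N$, the previous iteration cannot have been Indirect Replacement case 1, hence $N$ is an actual entry of row $i-1$ of $P$, at some cell $(i-1,j)$; the choice of $n_1$ forces $j'\le j$, so the cell above the new $N$ exists, and one must show that the entry $M$ of the working tableau at $(i-1,j')$ satisfies $M>N$. When $j'<j$ this holds because the previous iteration only enlarged one entry of row $i-1$; when $j'=j$ one analyzes how $N$ was produced: in the Dummy case $N+1$ sits at $(i-1,j-1)$ and the entry below it is at most $N$, forcing $j'\le j-1$, a contradiction; in Direct Replacement the entry at $(i-1,j)$ was just replaced by something larger than $N$; and in Indirect Replacement case 2 one invokes Lemma~\ref{L: box}, whose actual purpose is precisely this upper-neighbor check at the next row, not the (trivial) lower-neighbor check you attach it to. Finally, transplanting the non-ejectability argument from the well-definedness proof of $\Psi$ does not help here: there the replacement value is smaller than the replaced entry and the danger lies below and to the right, whereas here the replacement is larger and the danger lies above.
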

\begin{proof} The algorithm initializes the working tableau to equal $P$ which is decreasing. To show the output tableau is decreasing it suffices to assume that before any particular iteration the working tableau is decreasing and show that after that iteration, the resulting tableau is decreasing.

Let $P'$ be the working tableau at the beginning of the current iteration, in which $N$ is being inserted into $P'_{\geq i}$. Let $P''$ be the working tableau after this iteration. During the iteration, in row $i$ the number
$n_1$ is replaced by $N$ or $N$ is appended at the end; 
let $(i,j')$ be the position of this $N$.
After this iteration, 
the row will clearly be strictly decreasing. 
We may assume $i>1$ and must 
show that there is a number $M$ in position $(i-1,j')$ of $P''$ and it satisfies $M > N$.

If $n_1 = N$, then we are done since this iteration does not
change the working tableau at all.
We assume $n_1 < N$, 
so the previous iteration is not in the Indirect Replacement case 1.
Consequently, $N$ is in row $i - 1$ of $P$, say at $(i - 1, j)$.
We have $j' \le j$ by the choice of $n_1$. 
In particular there is a number $M$ in position $(i-1,j')$ of $P''$.
It remains to show that $M>N$.

If $j'<j$ then we obtain the required inequality $M>N$ since the $(i-1)$-th row was strictly decreasing before the previous iteration. So we may assume $j'=j$.

We consider the cases of the previous iteration:
\begin{itemize}
\item Dummy case. $N+1$ and $N$ are in row $i - 1$ of $P$.
Below this $N+1$, we have a number at most $N$, so $j' \leq j - 1$, contradiction.
\item Direct Replacement case.
During the previous iteration, 
the $N$ in cell $(i-1, j)$ is replaced by a larger number.
Thus, there is an $M > N$ at $(i - 1, j)$ of $P''$.

\item Indirect Replacement case 2. By Lemma~\ref{L: box}, 
the $n_1$ is in the first $j-1$ columns, a contradiction.
\end{itemize}
Since the row $i-1$ iteration was not terminal and we ruled out Indirect Replacement case 1, all cases are covered.
\end{proof} 
\begin{theorem}\label{T: inverse bijections}
$\Phi$ and $\Psi$ are mutually inverse functions.
\end{theorem}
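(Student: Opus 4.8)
The plan is to verify the two composition identities $\Psi \circ \Phi = \mathrm{id}$ and $\Phi \circ \Psi = \mathrm{id}$ by tracking the two algorithms row by row. Both $\Phi$ and $\Psi$ process rows in a single sweep (top-to-bottom for $\Phi$, bottom-to-top for $\Psi$), and in each row the local move falls into one of a handful of mnemonic-labeled cases. The strategy is to set up a row-by-row correspondence: if $\Phi$ inserting $m$ into $P$ visits rows $1,2,\dots,r$ using cases $C_1,\dots,C_r$ (ending with a terminal case T1 or T2 in row $r$), then $\Psi$ run on the output $(P',(r,c),\alpha)$ should visit rows $r,r-1,\dots,1$ using the ``matching'' cases, and vice versa. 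The key bookkeeping is the correspondence of data: the number $N$ being passed down in the $i$-th iteration of $\Phi$ should equal the bumping-path entry $m_i$ of $\Psi$ (with $N$ in row $i$ of $P'$ sitting exactly where $m_i$ sits), and the status bit $\alpha_i$ of $\Psi$ should encode, via Lemma~\ref{L: Feint is ejectable}, whether the $\Phi$-iteration in row $i$ was ``non-ejecting'' (T1 in the last row, or an iteration leaving the relevant entry ejectable) or not. Concretely: T1 $\leftrightarrow$ the initialization with $\alpha=1$ (corner removed); T2 $\leftrightarrow$ the $\alpha=0$ initialization plus the No Replacement case in the top row; D $\leftrightarrow$ Dummy; DR $\leftrightarrow$ Direct Replacement; IR1, IR2 both correspond to Indirect Replacement / No Replacement of $\Psi$ depending on whether $\Psi$'s search for an ejectable $x$ succeeds.

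\textbf{Key steps.} First I would prove a single-row lemma of the following shape: fix a decreasing tableau $Q$ (playing the role of $P_{\ge i}$, which is unchanged by both algorithms below row $i$), and suppose $\Phi$ is inserting $N$ into row $1$ of $Q$, landing in some case $C$ and passing a value $N'$ (or a cell/$\alpha$) to row $2$; then running the $\Psi$-iteration for the top row of $Q$'s output, with the data coming up from below matching what $\Phi$ sent down, recovers $N$ in the correct cell and the correct case-label, \emph{and} conversely. This requires checking each of the six $\Phi$-cases against the four $\Psi$-cases; the matching is forced once one observes (i) the ``largest $n_1 \le N$ in row $i$'' of $\Phi$ is exactly the bumping-path entry $m_i$ of $\Psi$ since both are determined by the same decreasing-row / bumping-path conditions, (ii) Lemma~\ref{L: box} is precisely what rules out a spurious cell below $n_1$ in the IR2/No-Replacement analysis, and (iii) the ejectability tests ``$n_1$ ejectable in $P_{>i}$'' (in $\Phi$) and ``$m_i$ ejectable in $P'_{\ge i}$'' (in $\Psi$) agree because $P_{>i}=P'_{>i}$ at the moment of the test. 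Second, I would run an induction on the number of rows processed: the top-row lemma handles the boundary (terminal row for $\Phi$ = initialization for $\Psi$, and conversely), and then peeling off one row reduces to the same statement for the smaller tableau $Q_{>1}$, with the inductive hypothesis supplying the middle-row agreement. Third, I would assemble the global composition identities from the row-wise agreements, using \eqref{E:insertion shape change} and \eqref{E:reverse insertion shape change} to match the shape changes and confirm the removable cell $s$ is the same on both sides.

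\textbf{Main obstacle.} The delicate point is the Indirect Replacement case and its interaction with the ``value moved need not lie on the bumping path'' phenomenon that the introduction flags as unusual. In $\Phi$'s IR1 we insert an ejectable $y$ from $P_{>i}$ that was not the entry being tracked, and the inverse $\Psi$ must re-insert the original $n_1$ exactly where it was; making sure $\Psi$'s Indirect-Replacement search finds this $n_1$ (i.e. that $n_1$ is ejectable in the relevant subtableau and is the \emph{smallest} such value in the window $(m_{i+1},m_i)$, matching $\Phi$'s choice of the \emph{largest} ejectable $y$ in the window $(n_2,n_1)$) is the crux. I expect this to hinge on a monotonicity/uniqueness argument about ejectable values in a decreasing tableau lying in a prescribed interval, together with careful attention to the IR2 branch where instead $n_2$ is pushed down; Lemma~\ref{L: box} and Lemma~\ref{L: Bumping path weakly right} are the tools, but verifying that the window endpoints line up correctly across the inversion — and that the $\alpha$-bit is propagated consistently through a run of Dummy cases via Lemma~\ref{L: Feint is ejectable} — will require the most care.
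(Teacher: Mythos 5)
Your plan is essentially the paper's own proof: the paper splits the theorem into the two composition lemmas ($\Phi\circ\Psi=\mathrm{id}$ and $\Psi\circ\Phi=\mathrm{id}$) and proves each by induction on the row $r$ of the terminal/removable cell, peeling off the top row and matching $\Phi$'s cases (T1, T2, D, DR, IR1, IR2) against $\Psi$'s (initialization, D, DR, IR, NR), with Lemma~\ref{L: Feint is ejectable} tracking the $\alpha$-bit and the largest-ejectable-in-$(n_2,n_1)$ versus smallest-ejectable-in-$(m_2,m_1)$ window argument resolving exactly the IR crux you flag. The only minor imprecision in your case table is that a T2 termination inverts to either the No Replacement or the Indirect Replacement case of $\Psi$ (according as $n_1=m$ or $n_1<m$), a split the paper's base case handles explicitly.
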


We prove Theorem 5.3 directly
by splitting the proof into two lemmas.
\begin{lemma} Let $\Psi(P, (r,c), \alpha)= (P', m)$.
Then $\Phi(P', m)=(P, (r,c), \alpha)$.
\end{lemma}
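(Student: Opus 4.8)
The plan is to prove this lemma by induction on $r$, the row index of the input cell, running the forward insertion $\Phi(P',m)$ step by step and showing at each iteration that it exactly undoes the corresponding iteration of $\Psi$. Throughout, I will track the bumping path $m_r < m_{r-1} < \dots < m_1$ computed by $\Psi$, the status indicators $\alpha_i$, and the sequence of inserted values $N_i$ produced by $\Phi$; the goal is to establish the invariant that when $\Phi$ processes row $i$, the value being inserted is $N_i = m_i$ (for $i < r$, and $N_r = m$ when $\alpha=0$; when $\alpha=1$, $\Phi$ should first append $m_r$ to reconstruct the removed corner), and that $\Phi$'s case at row $i$ is the inverse of $\Psi$'s case at row $i$. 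Once this invariant is maintained all the way down to row $1$, the output of $\Phi$ is exactly $P$ with the terminal data $((r,c),\alpha)$, as desired.

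First I would set up the base case and the top of the recursion. If $\alpha=1$, then $\Psi$ removed the corner $(r,c)$ with value $m_r$ and set $\alpha_r=1$; so $\Phi(P',m)$ should, in row $r$, be in Terminating case 1 (since $m$ has been ``carried up'' and $m_r$ must reappear at the end of row $r$) — here I need to check that $n_1$ fails to exist at the appropriate point, i.e. that appending $m_r$ is forced, reconstructing the shape change \eqref{E:insertion shape change}. If $\alpha=0$, then $\Psi$ set $m_{r+1}=0$ and processed row $r$ via one of NR or the other cases; correspondingly $\Phi$ begins inserting $N=m$ into row $r$. In either situation, by descending through the rows I would verify that $\Phi$'s classification matches: Dummy $\leftrightarrow$ Dummy (when $m_i-1\in R$ so $m_{i+1}=m_i-1$), Direct Replacement $\leftrightarrow$ Direct Replacement (controlled by whether $m_{i+1}\in R$ / ejectability of $m_i$ — and here Lemma~\ref{L: Feint is ejectable} is the key tool identifying $\alpha_i=0$ with ejectability of $m_i$ in $P'_{\ge i}$), Indirect Replacement $\leftrightarrow$ IR1, No Replacement $\leftrightarrow$ T2, and the ``pass-through'' phenomenon where $\Psi$ in an upper row replaces $m_i$ by a value from below corresponds to $\Phi$'s IR2. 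The crucial numerical facts I would lean on are that $\Psi$ chooses the \emph{smallest} ejectable $x$ with $m_i > x > m_{i+1}$ while $\Phi$ chooses the \emph{largest} ejectable $y$ with $n_1 > y > n_2$, so I must argue these extremal choices coincide when the roles of $(m_i,m_{i+1})$ and $(n_1,n_2)$ are appropriately matched — in particular $n_2$ on the $\Phi$ side equals $m_{i+1}$ on the $\Psi$ side by Lemma~\ref{L: Bumping path weakly right} controlling column positions, and Lemma~\ref{L: box} ensures IR2 is entered exactly when $\Psi$ did a replacement-from-below.

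The main obstacle I anticipate is the IR2 / ``indirect from below'' bookkeeping: in $\Psi$ a value $m_i$ in row $i$ can be replaced by a value $m_{i+1}$ that came from row $i+1$, even though $m_{i+1}$ is not on the bumping path in the naive sense, and $\Phi$ must regenerate exactly this; I will need to show that the ``largest ejectable $y$ with $n_1 > y > n_2$ does not exist, and $n_2 > 0$'' branch of $\Phi$ triggers in precisely the rows where $\Psi$ performed such a replacement, and that the value $N$ carried into row $i-1$ by $\Phi$ equals $m_{i-1}$ and not something else. Matching the ejectability conditions across the two algorithms will require repeated use of the definition of ejectable together with Lemma~\ref{L: Feint is ejectable}, and some care that the subtableaux $P'_{>i}$ seen by $\Phi$ agree with those seen by $\Psi$ — which holds because in both algorithms the rows below $i$ are finalized before row $i$ is touched. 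Once those case-by-case correspondences are nailed down, the shape bookkeeping \eqref{E:reverse insertion shape change} vs.\ \eqref{E:insertion shape change} and the recovery of $\alpha$ from the terminating case (T1 $\Rightarrow \alpha=1$, T2 $\Rightarrow\alpha=0$) follow immediately, completing the induction.
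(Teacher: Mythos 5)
Your overall strategy is the paper's: induct on $r$, precompute the bumping path $m_r<\dots<m_1$, and check row by row that the iteration of $\Phi$ at row $i$ undoes the iteration of $\Psi$ at row $i$ (so that the value $\Phi$ passes into row $i+1$ is $m_{i+1}$), with Lemma~\ref{L: Feint is ejectable} translating the flags $\alpha_i$ into ejectability of $m_i$ in $P'_{\ge i}$; the paper simply packages the row-by-row descent as induction on $r$, peeling off row $1$ and applying the hypothesis to $P_{>1}$. However, several of the concrete correspondences your plan rests on are wrong, and they are exactly the content of the lemma. First, orientation: $\Phi$ always begins by inserting $N=m=m_1$ into row $1$ and works downward, so the value reaching row $r$ is $m_r$, not $m$ (your ``$N_r=m$ when $\alpha=0$'' and ``$\Phi$ begins inserting $N=m$ into row $r$'' only make sense when $r=1$), and when $\alpha=1$ the Terminating case 1 append of $m_r$ occurs at the \emph{last} iteration, not ``first''. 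Second, the numerical claim $n_2=m_{i+1}$ is false in general: the definition of the bumping path only gives $n_2\le m_{i+1}$, with equality exactly when $m_{i+1}$ lies in row $i$ of $P$; Lemma~\ref{L: Bumping path weakly right} does not give what you want here.

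Because of this, your case dictionary would fail as written. $\Psi$'s Indirect Replacement at row $i$ is undone by $\Phi$'s IR1 when $m_{i+1}\notin R$ (then, since DR did not apply, $\alpha_{i+1}=0$, $m_{i+1}$ is ejectable, and the largest admissible $y$ equals $m_{i+1}$ by minimality of $x$), but by IR2 when $m_{i+1}\in R$ (then $n_2=m_{i+1}$ and no $y$ exists); so ``IR $\leftrightarrow$ IR1'' is incomplete. Likewise $\Psi$'s No Replacement at a non-terminal row $i<r$ (or at row $r$ with $\alpha=1$) does \emph{not} correspond to T2: $\Phi$ there lands in IR1 or IR2 and still passes $m_{i+1}$ downward; T2 (resp.\ T1) arises only at row $r$ when $\alpha=0$ (resp.\ $\alpha=1$), where $n_2=0$ because $(r,c)$ is removable. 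The paper's proof is precisely the verification of these splittings (``If $m_2>n_2$ then $\alpha_2=0$\dots; now assume $m_2=n_2$\dots''), and without them your invariant $N_{i+1}=m_{i+1}$ cannot be propagated. So the plan is the right one, but you need to redo the case matching with the correct dichotomy --- whether $m_{i+1}$ lies in row $i$, and whether $m_{i+1}$ is ejectable (i.e.\ $\alpha_{i+1}=0$) --- rather than the identifications you proposed.
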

\begin{proof}
Let $R$ (resp. $R'$) consist of the numbers 
in row 1 of $P$ (resp. $P'$).
The proof proceeds by induction on $r$.

For the base case,
assume $r = 1$.
If $\alpha = 1$, 
$m = \min(R)$ and $R' = R - \{ m\}$.
When we insert $m$ into $P'$,
the first iteration is 
in Terminating case 1. 
We will just append $m$ 
at the end of row 1 
and terminate at this cell.
If $\alpha = 0$,
we study the cases of the only iteration
in the reverse insertion:
\begin{enumerate}
\item[$\bullet$] 
Indirect Replacement case: 
In this case, $R' = R - \{m\} \sqcup \{x\}$ where
$x < m$ and $x$ is the smallest number in row 2 of $P$.
When we insert $m$ into $P'$,
it sets $n_1 = x$.
Since $n_1$ is ejectable in $P_{>1}'$,
it does not go to the first 3 cases. 
Then we have $n_2 = 0$.
There are no ejectable numbers 
in $P_{> 1}'$
between $n_2$ and $n_1$.
Thus, it goes to the Terminating case 2.
It replaces $x$ by $m$
and ends at this cell with $\alpha = 0$.
\item[$\bullet$] 
No Replacement case: 
In this case, $R = R'$
and there are no ejectable numbers in
$P_{> 1}'$ that are less than $m$.
When we insert $m$ into $P'$,
it sets $n_1 = m$ and $n_2 = 0$.
Thus, it goes to Terminating case 2.
It replaces $m$ by $m$
and ends at this cell with $\alpha = 0$.
\end{enumerate}

Now assume $r > 1$.
Consider the reverse insertion.
Before the last iteration,
a number $m_2 > 0$ is ejected 
from $P_{>1}'$.
During the last iteration,
it changes at most one number in $R$
and get $R'$.
Then it ejects $m$. 
By induction it suffices to show that 
when $m$ is inserted into $P'$, the 
first iteration of insertion changes $R'$ back to $R$
and inserts $m_2$ into $P_{> 1}'$.
Let us do a case study on the last iteration of the reverse insertion.
\begin{itemize}
\item Dummy case: 
$m_2 = m - 1$ and $m_2, m \in R$.
The algorithm fixes row 1 of $P$ so $R = R'$.
The first iteration of insertion
goes to the Dummy case.
It fixes row 1 and inserts $m - 1 = m_2$
into $P_{> 1}'$.

\item Direct Replacement case: 
$m_2$ was ejected with $\alpha_2=1$.
Thus $m_2$ is not ejectable in $P_{> 1}'$.
The algorithm replaces $m$ by $m_2$.
The first iteration of insertion
sets $n_1 = m_2$.
It goes to the Direct Replacement case:
$m_2$ is replaced by $m$ and $m_2$ is inserted.

\item Indirect Replacement case:
$m$ is replaced by $x$ which is ejectable in $P_{> 1}'$.
By the choice of $x$ there are no ejectable numbers in $P'_{> 1}$
between $m_2$ and $x$.
The first iteration of insertion
sets $n_1 = x$ and replaces it by $m$. 
It will not go to the first 3 cases.
Since $m$ is the smallest number in $R$
that is larger than $m_2$,
$m_2 \geq n_2$. If $m_2 > n_2$ then $\alpha_2=0$.
Thus $m_2$ is ejectable in $P_{> 1}'$.
During the subsequent insertion, 
the algorithm sets $y = m_2$ and inserts $m_2$.
Now assume $m_2 = n_2$.
Then the first iteration of the insertion
cannot find such a $y$. It inserts $n_2 = m_2$.

\item No Replacement case:
$R = R'$.
There are no ejectable numbers in $P'_{>1}$
between $m_2$ and $m$.
During the insertion,
the algorithm sets $n_1 = m$
and will not go to the first three cases. The proof 
proceeds as in the Indirect Replacement case. \qedhere
\end{itemize}
\end{proof}

\begin{lemma}
Let $\Phi(P, m)= (P',(r,c), \alpha)$.
Then $\Psi(P', (r,c), \alpha)=(P, m)$.
\end{lemma}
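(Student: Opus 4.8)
The plan is to prove the remaining lemma, $\Phi(P,m) = (P',(r,c),\alpha)$ implies $\Psi(P',(r,c),\alpha) = (P,m)$, by essentially the mirror-image induction of the previous lemma: induction on the number $r$ of the terminating row, by reading off the bumping path of $\Psi$ on $(P',(r,c),\alpha)$ from the insertion path of $\Phi$ on $(P,m)$. First I would establish the basic correspondence of paths. Running $\Phi$ on $(P,m)$ produces a sequence of inserted values $N = N_1, N_2, N_3, \dots$, one per row processed, terminating at row $r$; running $\Psi$ on $(P',(r,c),\alpha)$ produces a bumping path $m_r < \dots < m_1$. I would show by descending induction on the row index that these two objects match up: the ejected value $m_{i+1}$ from row $i+1$ in $\Psi$ is exactly the value $N_{i+1}$ that $\Phi$ inserted into row $i+1$, and that $\Psi$'s entry $m_i$ in row $i$ sits at the cell where $\Phi$ placed $N_i$. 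The key point for starting the induction is that $\Phi$'s terminal cell $(r,c)$ is a removable cell of $P'$ with $\alpha$ recording which terminal case occurred (T1 gives $\alpha=1$, T2 gives $\alpha=0$), matching the two initialization branches of $\Psi$.

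Next I would do the case analysis, row by row, checking that each case of $\Phi$ at row $i$ is inverted by the corresponding case of $\Psi$ at row $i$. The dictionary is: $\Phi$'s Dummy $\leftrightarrow$ $\Psi$'s Dummy; $\Phi$'s Direct Replacement $\leftrightarrow$ $\Psi$'s Direct Replacement; $\Phi$'s Indirect Replacement case 1 $\leftrightarrow$ $\Psi$'s Indirect Replacement; $\Phi$'s Indirect Replacement case 2 $\leftrightarrow$ $\Psi$'s No Replacement case. In each case I would verify two things: that $\Psi$, given the value $m_i$ restored from $P'$ and the ejected value $m_{i+1} = N_{i+1}$, lands in the claimed case, and that its action on row $i$ restores precisely the row of $P$. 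For instance, in $\Phi$'s Direct Replacement, $n_1 < N$ and $n_1$ is not ejectable in $P_{>i}$, and $\Phi$ replaces $n_1$ by $N$ and re-inserts $n_1$; in the reverse direction $\Psi$ sees $m_i = N$ with $m_{i+1} = n_1 \notin R$ and $m_{i+1}$ not ejectable in $P'_{\ge i+1}$ (so $\alpha_{i+1}=1$ by Lemma~\ref{L: Feint is ejectable}), which is exactly $\Psi$'s Direct Replacement, restoring $n_1$. The subtle cases are the Indirect Replacements, where one must use the maximality of $y$ chosen by $\Phi$ against the minimality of $x$ chosen by $\Psi$ (and the fact that $\Phi$ ejects $n_2$ in IR2 precisely when no ejectable $y$ strictly between $n_2$ and $n_1$ exists, which is exactly $\Psi$'s "$x$ does not exist" condition).

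I expect the main obstacle to be the bookkeeping around the status bits $\alpha_i$ and the ejectability conditions: one has to be careful that the subtableau $P'_{\geq i}$ at the moment $\Psi$ processes row $i$ agrees with $P_{\geq i}$ (the rows $\Phi$ has not yet touched by the time it reaches row $i$), so that "ejectable in $P'_{>i}$" means the same thing in both algorithms. This is where Lemma~\ref{L: Feint is ejectable} does the heavy lifting, translating $\alpha_{i+1} \in \{0,1\}$ into the ejectability status of the just-ejected value $m_{i+1}$, and one must confirm that $\Phi$'s own case distinctions (e.g. "$n_1$ not ejectable in $P_{>i}$" in DR, or the dichotomy between IR1 and IR2) are precisely calibrated to reproduce those status bits. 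Once the row-by-row inversion is checked, the induction closes immediately: stripping the first row reduces $\Phi(P,m)$ to an instance of $\Phi$ on $P_{>1}$ inserting $N_2$, whose inverse is $\Psi$ on $P'_{>1}$ at $(r-1,c)$ by the inductive hypothesis, and the first-row case analysis glues this back to the full statement.
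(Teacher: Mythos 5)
Your overall strategy is the same as the paper's (induction on the terminating row $r$, a row-by-row case analysis matching the value $\Phi$ sends downward with the value $\Psi$ ejects upward, and Lemma~\ref{L: Feint is ejectable} to translate the status bits $\alpha_i$ into ejectability). However, the explicit case dictionary you commit to is wrong, and it is wrong exactly at the step you flag as ``subtle.'' The correct correspondence is not ``IR1 $\leftrightarrow$ IR, IR2 $\leftrightarrow$ NR.'' In $\Phi$'s cases IR1, IR2 (and T2), whether the reverse step is Indirect Replacement or No Replacement is governed by whether $\Phi$ actually changed row $i$, i.e.\ whether $n_1<N$ or $n_1=N$: if $n_1<N$ then $\Psi$ must restore $n_1$, so it must land in Indirect Replacement with $x=n_1$ (which requires checking that $n_1$ is ejectable in $P_{>i}$ and is the smallest ejectable value in the interval $(m_{i+1},N)$); if $n_1=N$ then $\Psi$ must land in No Replacement. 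In particular your parenthetical claim that $\Phi$'s IR2 condition ``no ejectable value strictly between $n_2$ and $n_1$'' is \emph{exactly} $\Psi$'s ``$x$ does not exist'' condition is false: $\Psi$ searches the interval $(m_{i+1}, m_i)=(n_2, N)$, not $(n_2, n_1)$, and when $n_1<N$ the value $n_1$ itself is ejectable and lies in that interval.

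A concrete counterexample to your dictionary: let $P$ have rows $(6,4,2)$ and $(4,2)$ and insert $m=5$. Then $n_1=4<5$ is ejectable in $P_{>1}$, $n_2=2$, and there is no ejectable value strictly between $2$ and $4$, so $\Phi$ is in IR2: it replaces $4$ by $5$ and inserts $2$ below, eventually giving $P'$ with rows $(6,5,2)$ and $(4,2)$, $s=(2,2)$, $\alpha=0$. Running $\Psi(P',(2,2),0)$, the row-$1$ step is \emph{Indirect Replacement} with $x=4$ (which correctly restores the row), not No Replacement; if it were No Replacement, the $5$ would remain and the inversion would fail. Symmetrically, when $\Phi$ is in IR1 with $n_1=N$ (row unchanged), the reverse step is No Replacement, not Indirect Replacement, by the maximality of $y$. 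So to close the argument you must split IR1, IR2, and T2 according to $n_1=N$ versus $n_1<N$, as the paper does; with that refinement the rest of your plan (the D and DR cases, the use of Lemma~\ref{L: Feint is ejectable}, and stripping the first row for the induction) goes through.
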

\begin{proof}
Let $R$ (resp. $R'$) consist of the numbers 
in row 1 of $P$ (resp. $P'$).
The proof proceeds by induction on $r$.

In the base case $r=1$, the insertion has only one iteration.
If $\alpha = 1$, this iteration is in Terminating case 1.
It appends $m$ at the end of row 1.
During the subsequent reverse insertion,
$m$ will be removed from row 1 and ejected.
Now assume $\alpha = 0$.
If $n_1 = m$,
then the insertion leaves row 1 unchanged.
There are no ejectable numbers in $P_{>1}$
that are less than $m$.
During the reverse insertion,
the sole iteration goes to 
the No Replacement case:
row 1 is unchanged and $m$ is ejected.
If $n_1 < m$,
then the insertion replaces $n_1$ by $m$.
Since it is in the Terminating case 2,
$n_1$ is smallest ejectable number in $P_{> 1}$.
During the reverse insertion,
the only iteration goes to 
the Indirect Replacement case:
the $m$ is changed to $n_1$ and $m$ is ejected.
 
Now assume $r > 1$.
Consider the insertion.
During the first iteration,
it changes $n_1$ into $m$ in row 1.
Then it inserts a number into $P_{>1}'$.
Let $z$ be that number.
Now consider the reverse insertion.
By our inductive hypothesis,
before the last iteration,
$z$ is ejected under row 1 of the tableau.
Moreover, currently the tableau below
row 1 is identical to $P_{> 1}$.
We need to make sure the last iteration
changes $R'$ back to $R$
and ejects $m$.
Let us do a case study
on the first iteration of the insertion.

\begin{enumerate}
\item[$\bullet$] Dummy case:
$m, m-1 \in R$, $R = R'$, 
and $z = m-1$.
The last iteration of the reverse insertion
goes to the Dummy case:
it fixes the first row and ejects $m$.
\item[$\bullet$] Direct replacement case:
$n_1$ is changed to $m$ and $z = n_1 < m$.
Moreover $z$ is not ejectable in $P_{> 1}$.
When $z$ is ejected from $P_{> 1}$, $\alpha_2=1$.
The last iteration of the reverse insertion
goes to the Direct Replacement case:
It changes $m$ into $n_1$ and ejects $m$.

\item[$\bullet$] 
Indirect Replacement case 1:
$n_1$ is changed to $m$ and $z = y$.
$y$ is the largest ejectable
number in $P_{> 1}$ less than $n_1$.
Moreover $y > n_2$.
Consider the last iteration of
the reverse insertion.
Before this iteration, by induction and Lemma \ref{L: Feint is ejectable}
$y$ is ejected from $P_{> 1}$ with $\alpha_2=0$.
Then it sets $m_1 = m$.
It looks for $x$,
which is the smallest ejectable number
in $P_{> 1}$ between $y$ and $m$.
If $n_1 = m$,
then it goes to the No Replacement case:
Row 1 is fixed and $m$ is ejected. 
If $n_1 < m$ then $n_1$ must be ejectable in $P_{> 1}$ and $x=n_1$.
It goes to the Indirect Replacement case:
$m$ is replaced by $n_1$ and $m$ is ejected.

\item[$\bullet$] 
Indirect Replacement case 2:
$n_1$ is replaced by $m$
and $z = n_2 > 0$.
There are no ejectable numbers 
between $n_2$ and $n_1$ in $P_{> 1}$.
Consider the last iteration of
the reverse insertion.
It sets $m_1 = m$.
Since $n_2$ is already in row 1,
it must go to the last two cases.
If $n_1 = m$,
then it goes to the No Replacement case:
The first row is fixed and $m$ is ejected.
If $n_1 < m$,
then $n_1$ must be ejectable in $P_{> 1}$.
It goes to the Indirect Replacement case: 
$m$ is replaced by $n_1$ and $m$ is ejected.
\qedhere
\end{enumerate}
\end{proof}

Our insertion satisfies a Pieri property.

\begin{lemma}
\label{L: Pieri of insertion}
Let 
$\Phi(P, m) = (P', (r_1, c_1), \alpha)$
and $\Phi(P',m')=(P'',(r_2,c_2),\alpha')$.
If $m' < m$, then $c_1 < c_2$.
\end{lemma}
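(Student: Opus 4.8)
The plan is to reduce the statement to the Pieri property of the \emph{reverse} insertion (Lemma~\ref{L: Pieri of reverse insertion}) by way of the fact that $\Phi$ and $\Psi$ are mutually inverse (Theorem~\ref{T: inverse bijections}). Writing $P_0=P''$, $P_1=P'$, $P_2=P$, Theorem~\ref{T: inverse bijections} turns the hypotheses into the chain of reverse insertions
\[
\Psi(P_0,(r_2,c_2),\alpha')=(P_1,m'),\qquad \Psi(P_1,(r_1,c_1),\alpha)=(P_2,m),
\]
i.e.\ we remove the corner $(r_2,c_2)$ first (ejecting $m'$) and then the corner $(r_1,c_1)$ (ejecting $m$). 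The point is that Lemma~\ref{L: Pieri of reverse insertion} applies to this chain \emph{exactly} in the case $c_1<c_2$ (its hypothesis being that the second-removed corner has strictly smaller column), and in that case it yields $m>m'$. So Lemma~\ref{L: Pieri of reverse insertion} already gives us the implication $c_1<c_2\Rightarrow m>m'$; the content of the present lemma is its converse.

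I would obtain the converse by completing this to the trichotomy
\[
c_1<c_2\iff m>m',\qquad c_1=c_2\iff m=m',\qquad c_1>c_2\iff m<m'.
\]
Since the left-hand alternatives are mutually exclusive and exhaustive, and likewise the right-hand ones, it suffices to prove the three forward implications; their contrapositives then supply all the backward implications, and in particular $m>m'\Rightarrow c_1<c_2$, which is the claim. Thus the work reduces to establishing $c_1>c_2\Rightarrow m<m'$ and $c_1=c_2\Rightarrow m=m'$.

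For $c_1>c_2$, the chain $P''\to P'\to P$ now removes the \emph{smaller} corner first, so Lemma~\ref{L: Pieri of reverse insertion} does not apply verbatim; instead I would re-run its proof in this ``small-then-big'' configuration. Let $\bar m_{r_2}<\dots<\bar m_1=m'$ be the reverse bumping path of $(r_2,c_2)$ in $P''$ and $m_{r_1}<\dots<m_1=m$ that of $(r_1,c_1)$ in $P'$. One shows, by descending induction on the row index and the same four-case analysis (Dummy / Direct Replacement / Indirect Replacement / No Replacement) as in Lemma~\ref{L: Pieri of reverse insertion}, using Lemma~\ref{L: Bumping path weakly right} and Lemma~\ref{L: Feint is ejectable} to locate the ejected entries, that $m_i<\bar m_i$ for all $i$, hence $m=m_1<\bar m_1=m'$. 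For $c_1=c_2$, since $(r_1,c_1)$ is a removable corner of $P'$ and $(r_2,c_2)$ a removable corner of $P''$, equality of columns forces either $(r_1,c_1)=(r_2,c_2)$ with $\alpha'=0$, or $(r_2,c_2)=(r_1+1,c_1)$ with $\alpha'=1$; in each of these configurations one unwinds the two reverse insertions along the common column $c_1$ and reads off directly from the case definitions that the ejected values coincide, so $m=m'$.

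The main obstacle is the case $c_1>c_2$: although it mirrors Lemma~\ref{L: Pieri of reverse insertion}, the two bumping paths play swapped roles, so each case of the algorithm (especially Indirect Replacement, where the replacement value is dragged in from the row below) must be re-checked to ensure the inequality $m_i<\bar m_i$ survives; the case $c_1=c_2$ is less involved but still demands care with the edge configuration $(r_2,c_2)=(r_1+1,c_1)$, $\alpha'=1$. An alternative, if this bookkeeping proves unwieldy, is to prove Lemma~\ref{L: Pieri of insertion} directly by tracking the two \emph{forward} insertion paths of $\Phi$ row by row and showing the bump column of the second insertion strictly exceeds that of the first at every commonly visited row; but this runs into the same edge cases (unequal path lengths, termination via T1 versus T2) and does not seem shorter.
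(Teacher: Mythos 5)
There is a genuine gap: the trichotomy you propose is false, so the reduction cannot work. The two Pieri lemmas of the paper only give the single equivalence $m>m'\iff c_1<c_2$ (Corollary~\ref{C:forward Pieri}); they do \emph{not} refine the complementary case $m\le m'$ into ``$m=m'\iff c_1=c_2$'' and ``$m<m'\iff c_1>c_2$''. Concretely, take $P=\varnothing$ and $m=1$, so $\Phi(\varnothing,1)$ produces the one-box tableau containing $1$ with output cell $(r_1,c_1)=(1,1)$ and $\alpha=1$. Now insert $m'=2$: the entry $1$ in row~$1$ is replaced by $2$, and since $1$ is not ejectable in the empty tableau $P'_{>1}$ we are in the Direct Replacement case, so $1$ is inserted into row~$2$ and appended there by Terminating case 1, giving $(r_2,c_2)=(2,1)$ with $\alpha'=1$. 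Thus $c_1=c_2=1$ while $m=1<2=m'$. This is exactly your edge configuration $(r_2,c_2)=(r_1+1,c_1)$ with $\alpha'=1$, and it refutes the implication ``$c_1=c_2\Rightarrow m=m'$'' as well as the equivalence ``$m<m'\iff c_1>c_2$'' that your exhaustiveness argument would yield. (It does not, of course, contradict the lemma itself, whose hypothesis $m'<m$ fails here.) Since the three forward implications cannot all hold, the contrapositive bookkeeping that was supposed to deliver $m'<m\Rightarrow c_1<c_2$ collapses.

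A second, related point: Lemma~\ref{L: Pieri of reverse insertion} only supplies the direction $c_1<c_2\Rightarrow m>m'$, which is \emph{not} the direction asserted here; the whole content of the present lemma is the other half of the eventual equivalence in Theorem~\ref{T: Pieri}, so it genuinely requires a new argument rather than a symmetry with the reverse-insertion lemma. The paper proves it directly on the forward insertion: letting $m_{r_1}<\cdots<m_1$ be the reverse bumping path of $(r_1,c_1)$ in $P'$, Theorem~\ref{T: inverse bijections} identifies $m_1=m>m'$, and then a case analysis (D, DR, IR1, IR2) of the first iteration of $\Phi(P',m')$ shows that the value passed down into $P'_{>1}$ is strictly smaller than $m_2$ (using Lemma~\ref{L:ejected entry is ejectable} to see that $m_2$ is ejectable in $P'_{>1}$ in the DR case), so that inductively the second insertion can only terminate strictly to the right of column $c_1$. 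This is essentially the ``alternative'' direct approach you mention at the end but did not carry out; to repair your write-up you would need to execute that argument (or something equivalent), not the trichotomy.
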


\begin{proof}
Let $m_{r_1} < \cdots < m_1$
be the bumping path of $(r_1, c_1)$ in $P'$. 
By the definition of $\Psi$ on $(P',(r_1,c_1),\alpha)$, 
the output value is $m_1$. 
Since $\Psi$ is inverse to $\Phi$,
$\Psi(P',(r_1,c_1),\alpha)=(P,m)$. Thus $m_1=m>m'$. 
If $\Phi$ on $(P',m')$ ends in the first iteration,
we are done. 
Otherwise, after this iteration,
another number is inserted into $P_{> 1}'$.
It is enough to ensure that this number is
smaller than $m_2$.
During this iteration,
$\Phi$ finds a number $n_1$
in row 1 of $P'$.
We have $n_1 \leq m' < m_1$.
Since $m_{r_1} < \cdots < m_1$
is a bumping path, $m_2 \geq n_1$.
Now consider the case of the first iteration. 
\begin{itemize}
\item Dummy case: The number $m' - 1$ is inserted into $P_{> 1}'$.
We have $m_2 \geq n_1 = m' > m' - 1$. 
\item Direct Replacement case:
The number $n_1$ is inserted into $P_{> 1}'$.
Notice that $m_2$ is ejectable in $P_{> 1}'$
since it is the end of a bumping path
in $P_{> 1}'$.
However, 
$n_1$ is not ejectable in $P_{> 1}'$
by the condition of this case. Thus $m_2\ne n_1$. Since $m_2\ge n_1$ we deduce that $m_2 > n_1$.
\item Indirect Replacement case 1:
The number $y$ is inserted into $P_{> 1}'$.
We have $m_2 \geq n_1 > y$.
\item Indirect Replacement case 2:
The number $n_2$ is inserted into $P_{> 1}'$.
We have $m_2 \geq n_1 > n_2$. \qedhere
\end{itemize}
\end{proof}

\begin{example}\label{X:the forward insertion}
Let $P$ be the following decreasing tableau:
$$
\ytableausetup{boxsize=5mm}
\begin{ytableau}
4 & 2 & 1\\
\end{ytableau}.
$$
After inserting the number $m = 4$ to $P$,
we obtain $(P', (r_1, c_1), \alpha)$
where $(r_1, c_1) = (2,1)$, $\alpha = 1$
and $P'$ is 
$$
\begin{ytableau}
4 & 2 & 1\\
*(green!30)2
\end{ytableau}.
$$

Next, insert $m' = 3$ to $P'$
and get $(P'', (r_2, c_2), \alpha')$
where $(r_2, c_2) = (2,2)$, $\alpha = 1$
and $P''$ is 
$$
\begin{ytableau}
4 & 3 & 1\\
2 & *(green!30)1
\end{ytableau}.
$$
This aligns with Lemma~\ref{L: Pieri of insertion}
since $c_1 < c_2$ and $m' < m$.
\end{example}

To summarize, our new reverse insertion
satisfies the following Pieri property. 

\begin{theorem}
\label{T: Pieri}
Let $P$ be a decreasing tableau.
Apply successive reverse insertions
\begin{align*}
    \Psi(P, (r_1,c_1),\alpha)&= (P',m) \\
    \Psi(P',(r_2,c_2),\alpha')&= (P'',m')
\end{align*}
Then $c_2 < c_1$ if and only if $m' > m$.
\end{theorem}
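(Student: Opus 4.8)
The plan is to observe that Theorem~\ref{T: Pieri} is exactly the conjunction of two one-directional statements, one of which is already in hand and the other of which is the transcription of Lemma~\ref{L: Pieri of insertion} through the fact that $\Phi$ and $\Psi$ are mutually inverse.

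First I would dispatch the implication $c_2 < c_1 \Rightarrow m' > m$: this is precisely Lemma~\ref{L: Pieri of reverse insertion} applied to the two successive reverse insertions $\Psi(P,(r_1,c_1),\alpha) = (P',m)$ and $\Psi(P',(r_2,c_2),\alpha') = (P'',m')$, so there is nothing to prove. For the converse $m' > m \Rightarrow c_2 < c_1$, I would run everything backwards. By Theorem~\ref{T: inverse bijections}, the two hypotheses are equivalent to $\Phi(P'',m') = (P',(r_2,c_2),\alpha')$ and $\Phi(P',m) = (P,(r_1,c_1),\alpha)$. These are now a legitimate pair of successive forward insertions: insert $m'$ into $P''$ to produce the corner $(r_2,c_2)$, then insert $m$ into the resulting tableau $P'$ to produce the corner $(r_1,c_1)$. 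So Lemma~\ref{L: Pieri of insertion}, read with $(m,m')$ replaced by $(m',m)$ and $((r_1,c_1),(r_2,c_2))$ replaced by $((r_2,c_2),(r_1,c_1))$, says that $m < m'$ forces $c_2 < c_1$. Putting the two implications together yields the biconditional (and in particular rules out $c_1 = c_2$ whenever $m' > m$).

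The only point demanding attention is the bookkeeping in the relabeling: one must check that the tableau returned by $\Phi(P'',m')$ is exactly $P'$, so that $\Phi(P',m)$ really is the second stage of a composition of insertions, which is immediate from Theorem~\ref{T: inverse bijections}; and that the strict inequalities in Lemma~\ref{L: Pieri of insertion} line up with the strict inequalities in the statement. I do not expect a genuine obstacle here, since all of the combinatorial content already resides in Lemmas~\ref{L: Pieri of reverse insertion} and~\ref{L: Pieri of insertion} together with the inverse-bijection theorem.
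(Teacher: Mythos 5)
Your proposal is correct and follows exactly the route the paper takes: the paper's proof is the one-line citation of Lemma~\ref{L: Pieri of reverse insertion} (forward implication), Lemma~\ref{L: Pieri of insertion} (converse, after transporting through Theorem~\ref{T: inverse bijections}), and your write-up just makes the relabeling explicit. The bookkeeping you flag is handled correctly, so nothing is missing.
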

\begin{proof}
Follows from Lemma~\ref{L: Pieri of reverse insertion},  Lemma~\ref{L: Pieri of insertion}, and Theorem~\ref{T: inverse bijections}.
\end{proof}

The following is an equivalent restatement for insertion.

\begin{corollary}
\label{C:forward Pieri}
Let $P$ be a decreasing tableau and $m,m'\in \Z_{>0}$. Applying successive insertions
$\Phi(P,m)=(P',(r_1,c_1),\alpha)$ and $\Phi(P',m') = (P'',(r_2,c_2),\alpha')$, we have
 $m>m'$ if and only if $c_1<c_2$.
\end{corollary}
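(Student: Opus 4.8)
The plan is to deduce this directly from Theorem~\ref{T: Pieri} by passing to the reverse insertion via Theorem~\ref{T: inverse bijections}. Starting from the hypothesis $\Phi(P,m)=(P',(r_1,c_1),\alpha)$, Theorem~\ref{T: inverse bijections} gives $\Psi(P',(r_1,c_1),\alpha)=(P,m)$; likewise $\Phi(P',m')=(P'',(r_2,c_2),\alpha')$ gives $\Psi(P'',(r_2,c_2),\alpha')=(P',m')$. These two reverse insertions form a chain beginning at $P''$: first apply $\Psi$ at the removable cell $(r_2,c_2)$ of $P''$, obtaining $(P',m')$; then apply $\Psi$ at the removable cell $(r_1,c_1)$ of $P'$, obtaining $(P,m)$.

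Next I would verify that this chain satisfies the hypotheses of Theorem~\ref{T: Pieri}: $P''$ is a decreasing tableau since it is an output of $\Phi$, and $(r_2,c_2)$, $(r_1,c_1)$ are removable cells of $P''$, $P'$ respectively by the definition of $\Phi$. Now apply Theorem~\ref{T: Pieri} to this chain, matching the ``first'' reverse insertion there with $\Psi(P'',(r_2,c_2),\alpha')=(P',m')$ and the ``second'' with $\Psi(P',(r_1,c_1),\alpha)=(P,m)$. Under this identification the theorem's conclusion that the second column is smaller than the first column if and only if the second ejected value exceeds the first reads $c_1 < c_2 \iff m > m'$, which is precisely the claim.

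I do not expect a genuine obstacle: the argument is a formal consequence of the inverse-bijection property together with Theorem~\ref{T: Pieri}. The only point requiring care is the bookkeeping --- keeping track of which removable cell and which ejected value play the ``first'' and ``second'' roles when Theorem~\ref{T: Pieri} is invoked, and confirming that the removable-cell hypotheses transport correctly along the inverse map $\Psi=\Phi^{-1}$ so that the chain $P''\to P'\to P$ is a legitimate pair of successive reverse insertions.
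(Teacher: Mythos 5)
Your proposal is correct and matches the paper's intent: the paper presents this corollary as the restatement of Theorem~\ref{T: Pieri} under the inverse bijection of Theorem~\ref{T: inverse bijections}, which is exactly your argument of converting the two forward insertions into the reverse chain $P''\to P'\to P$ and reading off the equivalence $c_1<c_2 \iff m>m'$. The bookkeeping you flag (which cell/value plays the first versus second role, and that $(r_1,c_1)$, $(r_2,c_2)$ are removable in $P'$, $P''$ by the definition of $\Phi$) is handled correctly, so no gap remains.
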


Given a compatible pair $(a,i)$ and starting with the empty tableau pair, use $\Phi$ to insert $a_1$, then $a_2$, and so on, recording the insertion of $a_k$ by $i_k$, producing a tableau pair $(P,Q)$ where $P$ is decreasing. Denote this map by $\PhiRSK(a,i)=(P,Q)$.

\begin{corollary} \label{C:RSK} $\PhiRSK$ is a weight-preserving bijection $\CP_w\to \DS$.
\end{corollary}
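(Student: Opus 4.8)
The plan is to establish that $\PhiRSK$ is a well-defined, weight-preserving bijection from $\CP_w$ to $\DS$ by assembling the pieces proved earlier in the paper. First I would check that $\PhiRSK$ lands in $\DS$: each application of $\Phi$ to a decreasing tableau yields a decreasing tableau (well-definedness of $\Phi$), so $P$ is decreasing; the recording tableau $Q$ is built by placing the value $i_k$ into the new (or terminal) cell $s=(r,c)$ returned by the $k$-th insertion. To see that $Q$ is a genuine set-valued tableau of the same shape as $P$, I would use the Pieri property in Corollary~\ref{C:forward Pieri}: within a fixed value of $i$, the compatible-pair condition forces the $a_k$'s to be strictly increasing, hence by Corollary~\ref{C:forward Pieri} the columns $c_1<c_2<\cdots$ of the successively added cells strictly increase, so the cells of $Q$ carrying a fixed label form a horizontal strip and the entries in each box are totally ordered; across a drop in $i$ (from $i_k$ to $i_{k+1}<i_k$) one gets the complementary inequality on columns, which is exactly what is needed for the column-strictness of a set-valued tableau. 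Finally $\shape(P)=\shape(Q)$ since every insertion either grows the shape by the cell recorded in $Q$ (when $\alpha=1$) or leaves the shape unchanged while recording $i_k$ in the terminal cell (when $\alpha=0$), matching the set-valued structure.

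Next I would verify $[a]_H=w$ is preserved as an invariant: by Lemma~\ref{L: Reverse insertion Hecke class} applied to $\Psi=\Phi^{-1}$, each insertion step satisfies $\row(P')\,m \Hequiv \row(P)$ for the new reading word, wait—more precisely, inserting $m$ into $P$ to get $P'$ gives $\row(P') \Hequiv \row(P)\,m$ up to the ejected value; iterating, $\row(P) \Hequiv a_n \cdots a_1$ read appropriately, so $[\row(P)]_H = [a]_H = w$, placing $(P,Q)$ in $\DS_w$. Weight-preservation is immediate: the multiset of entries of $Q$ is exactly $\{i_1,\dots,i_n\}$, so $\wt(Q)=\wt(i)$.

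For bijectivity I would construct the inverse explicitly using $\Psi$. Given $(P,Q)\in\DS$, read off the largest value in $Q$ and, among the cells containing it, the one with the largest column index; this is the cell $s$ and $\alpha$ is determined by whether $s$ is a cell of $\shape(P)$ that is "new" (the box of $Q$ having that entry as a singleton-at-the-end) versus an $\alpha=0$ terminal cell—here I would lean on Theorem~\ref{T: inverse bijections} ($\Phi$ and $\Psi$ mutually inverse) together with Theorem~\ref{T: Pieri} to guarantee that peeling off boxes in decreasing order of label, and within a label in decreasing order of column, recovers a valid reverse-insertion order and that the emitted values $m$ form, together with the labels, a compatible pair. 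Running $\Psi$ repeatedly undoes $\PhiRSK$ and produces $(a,i)\in\CP_w$; Theorem~\ref{T: inverse bijections} shows this is a two-sided inverse.

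The main obstacle I anticipate is the interface between the single-step Pieri statements (Corollary~\ref{C:forward Pieri}, Theorem~\ref{T: Pieri}) and the global claim that $Q$ is an honest element of $\SVT$ and that the de-insertion order is forced: one must argue that when the label drops strictly ($i_{k+1}<i_k$) the new cell's column does \emph{not} continue strictly increasing—equivalently that reverse-inserting in the canonical order (biggest label first, biggest column first) always presents $\Psi$ with a legitimate removable corner and never "gets stuck." This is exactly the standard RSK-type bookkeeping, but it must be run carefully against the unusual feature that $\Phi$ may move values off the bumping path; fortunately all the needed control is already packaged in Theorem~\ref{T: Pieri}, so the argument should reduce to a clean induction on $n$.
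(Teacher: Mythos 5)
Your outline follows the same route as the paper's (very short) proof: set-valuedness of $Q$ from the forward Pieri property (Corollary~\ref{C:forward Pieri}), invertibility by running $\Psi$ at the cells dictated by the entries of $Q$ (via Theorem~\ref{T: inverse bijections} and Theorem~\ref{T: Pieri}), membership $P\in\Dec_w$ from Lemma~\ref{L: Reverse insertion Hecke class}, and weight preservation by construction. However, the crucial step is wrong as written. Corollary~\ref{C:forward Pieri} states $m>m'$ \emph{if and only if} $c_1<c_2$. So if, within a block of constant recording label, the letters are inserted in increasing order (which is what ``insert $a_1$, then $a_2$, \dots'' gives, since $i_j=i_{j+1}$ forces $a_j<a_{j+1}$), the corollary yields $c_1\ge c_2$, i.e.\ weakly \emph{decreasing} columns --- the opposite of what you assert and not what $Q\in\SVT$ requires. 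Indeed, the left-to-right order fails outright: for $i=33221$, $a=24131$ (the paper's own example), inserting $2$ and then $4$ records the label $3$ in cells $(1,1)$ and $(2,1)$, violating column strictness, and no later step can repair this. The insertion underlying $\PhiRSK$ must process the compatible pair from its \emph{right} end (as in the paper's examples and in the description of $\bDS^\Red$); the paper's one-sentence definition of $\PhiRSK$ is loosely worded here, but under the literal left-to-right reading the corollary itself is false. With the right-to-left order, within each constant-$i$ block the inserted letters strictly decrease, Corollary~\ref{C:forward Pieri} gives strictly increasing columns, and since the labels are weakly increasing in time and a cell can only acquire additional labels while it is removable, $Q\in\SVT$ follows. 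Relatedly, your claim that ``across a drop in $i$ one gets the complementary inequality on columns'' is neither available (no relation between the letters is imposed when the label changes) nor needed for column strictness.

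Two smaller points. For the Hecke class, forward insertion ejects nothing; the exact statement, obtained from Lemma~\ref{L: Reverse insertion Hecke class} via Theorem~\ref{T: inverse bijections}, is $\row(P')\Hequiv \row(P)\,m$, and iterating with the right-to-left order gives $\row(P)\Hequiv \rev(a)$, hence $\rev(\row(P))\Hequiv a$, which is precisely the condition $P\in\Dec_w$ (your ``$[\row(P)]_H=[a]_H$'' has the reversal on the wrong side). Your description of the inverse (peel off the largest label of $Q$, rightmost column first, apply $\Psi$ with $\alpha$ determined by whether the label is alone in its cell) is correct and is exactly what the paper means by ``using $\Psi$ at the sequence of removable boxes given by the entries of $Q$''; compatibility of the recovered pair then comes from Lemma~\ref{L: Pieri of reverse insertion}, so that part of your plan needs only the same order correction as above.
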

\begin{proof}
Corollary \ref{C:forward Pieri} implies that $Q$ is set-valued. Moreover it also implies that the process can be inverted: if $(P,Q)\in \DS$, then using $\Psi$ at the sequence of removable boxes given by the entries of $Q$, one recovers $(a,i)\in\CP_w$.
\end{proof}

\begin{remark} Reversing comparison of values, one obtains a weight-preserving bijection $\PhiRSK^{\Inc}:\CP_w\cong \IR$.
\end{remark}


%
%

\bibliographystyle{alphaurl}
\bibliography{ct-sample}

\end{document}